\newtheorem{Theorem}{Theorem}[section]
\newtheorem{Corollary}[Theorem]{Corollary}
\newtheorem{Lemma}[Theorem]{Lemma}
\newtheorem{Definition}[Theorem]{Definition}
\numberwithin{equation}{section}
\def\pn{\par\noindent}
\begin{document}

\leftline{ \scriptsize \it Bulletin of the Iranian Mathematical
Society  Vol. {\bf\rm XX} No. X {\rm(}201X{\rm)}, pp XX-XX.}

\vspace{1.3 cm}

\title{On weakly $\mathfrak{F}_{s}$-quasinormal subgroups of finite groups}
\author{Y. Mao, X. Chen$^*$ and W. Guo}

\thanks{{\scriptsize
\hskip -0.4 true cm MSC(2010): Primary: 20D20; Secondary: 20D10.
\newline Keywords: $\mathfrak{F}$-hypercenter, weakly $\mathfrak{F}_{s}$-quasinormal subgroups, Sylow subgroups, $p$-nilpotence, supersolubility.\\
$*$Corresponding author
\newline\indent{\scriptsize $\copyright$ 2014 Iranian Mathematical
Society}}}

\maketitle

\begin{center}
Communicated by\;
\end{center}

\begin{abstract} Let $\mathfrak{F}$ be a formation and $G$ a finite group. A subgroup $H$ of $G$ is said to be weakly $\mathfrak{F}_{s}$-quasinormal in $G$ if $G$ has an $S$-quasinormal  subgroup $T$ such that $HT$ is $S$-quasinormal in $G$ and $(H\cap T)H_{G}/H_{G}\leq Z_{\mathfrak{F}}(G/H_{G})$, where $Z_{\mathfrak{F}}(G/H_{G})$ denotes the $\mathfrak{F}$-hypercenter of $G/H_{G}$. In this paper, we study the structure of finite groups by using the concept of weakly $\mathfrak{F}_{s}$-quasinormal subgroup.
\end{abstract}

\vskip 0.2 true cm


\pagestyle{myheadings}
\markboth{\rightline {\scriptsize Mao, Chen and Guo}}
         {\leftline{\scriptsize On weakly $\mathfrak{F}_{s}$-quasinormal subgroups}}

\bigskip
\bigskip

\section{\bf Introduction}
\vskip 0.4 true cm

Throughout this paper, all groups considered are finite. $G$ always denotes a group, $\pi$ denotes a set of primes and $p$ denotes a prime. Let $|G|_{p}$ denote the order of Sylow $p$-subgroups of $G$. For any subgroup $H$ of $G$, we use $H_{G}$ and $H^{G}$ to denote the largest normal subgroup of $G$ contained in $H$ and the smallest normal subgroup of $G$ containing $H$, respectively.

A class of groups $\mathfrak{F}$ is called a formation if it is closed under taking homomorphic images and subdirect products. A formation $\mathfrak{F}$ is called saturated if $G\in \mathfrak{F}$ whenever $G/\Phi(G)\in \mathfrak{F}$. Also, a formation $\mathfrak{F}$ is said to be $S$-closed if every subgroup of $G$ belongs to $\mathfrak{F}$ whenever $G\in \mathfrak{F}$. The $\mathfrak{F}$-residual of $G$, denoted by $G^{\mathfrak{F}}$, is the smallest normal subgroup of $G$ with quotient in $\mathfrak{F}$. We use $\mathfrak{U}$, $\mathfrak{U}_p$ and $\mathfrak{N}_p$ to denote the formations of all supersoluble groups, $p$-supersoluble groups and $p$-nilpotent groups, respectively.

For a class of groups $\mathfrak{F}$, a chief factor $L/K$ of $G$ is said to be $\mathfrak{F}$-central in $G$ if $L/K\rtimes G/C_{G}(L/K)\in \mathfrak{F}$. A normal subgroup $N$ of $G$ is called $\mathfrak{F}$-hypercentral in $G$ if either $N=1$ or every chief factor of $G$ below $N$ is $\mathfrak{F}$-central in $G$. Let $Z_\mathfrak{F}(G)$ denote the $\mathfrak{F}$-hypercentre of $G$, that is, the product of all $\mathfrak{F}$-hypercentral normal subgroups of $G$. All unexplained notation and terminology are standard, as in \cite {KT,G2,HB}.

Recall that a subgroup $H$ of $G$ is said to be quasinormal (resp. $S$-quasinormal) in $G$ if $H$ permutes with every subgroup (resp. Sylow subgroup) of $G$. Let $\mathfrak{F}$ be a formation. Recently, Huang \cite{JH} introduced the concept of $\mathfrak{F}_{s}$-quasinormal subgroup: a subgroup $H$ of $G$ is said to be $\mathfrak{F}_{s}$-quasinormal in $G$ if $G$ has a normal subgroup $T$ such that $HT$ is $S$-quasinormal in $G$ and $(H\cap T)H_{G}/H_{G}\leq Z_{\mathfrak{F}}(G/H_{G})$. Also, Miao and Li \cite{LM} introduced the concept of $\mathfrak{F}$-quasinormal subgroup: a subgroup $H$ of $G$ is said to be $\mathfrak{F}$-quasinormal in $G$ if $G$ has a quasinormal subgroup $T$ such that $HT$ is quasinormal in $G$ and $(H\cap T)H_{G}/H_{G}\leq Z_{\mathfrak{F}}(G/H_{G})$. By using these two concepts, the authors obtained some interesting results on the structure of finite groups. As a continuation of the above ideas, we introduce the following weaker concept.

\begin{Definition} Let $\mathfrak{F}$ be a formation. A subgroup $H$ of $G$ is said to be weakly $\mathfrak{F}_{s}$-quasinormal in $G$ if $G$ has an $S$-quasinormal  subgroup $T$ such that $HT$ is $S$-quasinormal in $G$ and $(H\cap T)H_{G}/H_{G}\leq Z_{\mathfrak{F}}(G/H_{G})$. \end{Definition}

Note that not only the concepts of $\mathfrak{F}_{s}$-quasinormal subgroup and $\mathfrak{F}$-quasinormal subgroup, but also many other subgroup embedding properties are generalized by our concept (see Section 4 below). In this present paper, we study the properties of weakly $\mathfrak{F}_{s}$-quasinormal subgroups, and derive some criteria for a finite group to be $p$-nilpotent or supersoluble in terms of weakly $\mathfrak{F}_{s}$-quasinormal subgroups.

\section{\bf Preliminaries}
\vskip 0.4 true cm

\begin{Lemma}\label{2.1}\textup{\cite[Lemma 2.1]{Gu1}} Let $\mathfrak{F}$ be a non-empty saturated formation, $H\leq G$ and $N\unlhd G$. Then:

$(1)$ $Z_{\mathfrak{F}}(G)N/N\leq Z_{\mathfrak{F}}(G/N)$.

$(2)$ If $\mathfrak{F}$ is S-closed, then $Z_{\mathfrak{F}}(G)\cap H\leq Z_{\mathfrak{F}}(H)$.
\end{Lemma}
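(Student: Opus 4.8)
The plan is to work directly from the definition of $Z_{\mathfrak{F}}(G)$ as the largest $\mathfrak{F}$-hypercentral normal subgroup of $G$, using two standard facts: the Jordan--Hölder theorem for groups with operators, and the observation that whether a chief factor $L/K$ is $\mathfrak{F}$-central depends only on its $G$-isomorphism type, since $G$-isomorphic chief factors $L/K\cong L'/K'$ have equal centralizers and isomorphic associated groups $(L/K)\rtimes(G/C_{G}(L/K))\cong(L'/K')\rtimes(G/C_{G}(L'/K'))$. Write $Z=Z_{\mathfrak{F}}(G)$, and fix a chief series of $G$ whose segment up to $Z$ consists of $\mathfrak{F}$-central factors.

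For $(1)$ it suffices to prove that $ZN/N$ is $\mathfrak{F}$-hypercentral in $G/N$. The second isomorphism theorem provides a $G$-isomorphism $ZN/N\cong Z/(Z\cap N)$ that matches a chief series of $G/N$ from $N$ to $ZN$ with a chief series of $G$ from $Z\cap N$ to $Z$. The factors of the latter are $G$-chief factors below $Z$, hence $\mathfrak{F}$-central in $G$; by $G$-isomorphism invariance, so are the factors of $G$ between $N$ and $ZN$. Finally, a chief factor $L/K$ of $G$ with $N\leq K<L\leq ZN$ has the same underlying group and the same centralizer quotient as the corresponding chief factor of $G/N$, so it is $\mathfrak{F}$-central in $G$ precisely when that factor is $\mathfrak{F}$-central in $G/N$. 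Hence $ZN/N\leq Z_{\mathfrak{F}}(G/N)$. (This part uses only that $\mathfrak{F}$ is a formation.)

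For $(2)$ I would intersect the chosen chief series of $G$ with $H$ and refine the resulting normal series of $H$ to a chief series from $1$ to $Z\cap H$; it then suffices to show each of its factors is $\mathfrak{F}$-central in $H$. If $G_{i-1}<G_i$ are consecutive terms of the $G$-series below $Z$ and $V<U$ is a refining $H$-chief factor with $G_{i-1}\cap H\leq V<U\leq G_i\cap H$, then $x(G_{i-1}\cap H)\mapsto xG_{i-1}$ is an injective $H$-homomorphism, so $U/V$ is $H$-isomorphic to the section $\bar U/\bar V$ of the $\mathfrak{F}$-central factor $L/K:=G_i/G_{i-1}$, where $\bar U=UG_{i-1}/G_{i-1}$ and $\bar V=VG_{i-1}/G_{i-1}$. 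In particular $H\cap C_{G}(L/K)\leq C_{H}(U/V)$. Putting $Q_1=HC_{G}(L/K)/C_{G}(L/K)\leq G/C_{G}(L/K)$, $S$-closure gives $(L/K)\rtimes Q_1\in\mathfrak{F}$; inside this group $\bar U\,Q_1/\bar V\cong(\bar U/\bar V)\rtimes Q_1$ is a section, hence lies in $\mathfrak{F}$, and factoring out the part of $Q_1$ acting trivially on $\bar U/\bar V\cong U/V$ yields $(U/V)\rtimes(H/C_{H}(U/V))\in\mathfrak{F}$. Thus $U/V$ is $\mathfrak{F}$-central in $H$, so $Z\cap H$ is $\mathfrak{F}$-hypercentral in $H$ and $Z\cap H\leq Z_{\mathfrak{F}}(H)$.

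The crux of the whole argument is this last chain of reductions in $(2)$: realizing the small $H$-chief factor $U/V$ as a section of a $G$-chief factor $L/K$ lying in $Z$ and then exhibiting $(U/V)\rtimes(H/C_{H}(U/V))$ as a section of the $\mathfrak{F}$-group $(L/K)\rtimes(G/C_{G}(L/K))$. This is precisely where $S$-closure is indispensable, and it must be combined with the centralizer inclusion $H\cap C_{G}(L/K)\leq C_{H}(U/V)$ to control the action on the smaller factor. The remaining steps are bookkeeping with the Jordan--Hölder theorem and the second isomorphism theorem; if one prefers, the abelian case of $(2)$ can instead be read off instantly from a local definition $\mathfrak{F}=LF(f)$, needing only $H/C_{H}(U/V)\in f(p)$, which follows from $G/C_{G}(L/K)\in f(p)$ and $S$-closure of $f(p)$.
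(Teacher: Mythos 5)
Your proof is correct. Note, however, that the paper itself offers no argument for this lemma: it is quoted verbatim from the literature, namely \cite[Lemma 2.1]{Gu1}, so there is no internal proof to match yours against. Your self-contained argument is the standard one and is sound: part $(1)$ via the $G$-isomorphism $Z_{\mathfrak{F}}(G)N/N\cong Z_{\mathfrak{F}}(G)/(Z_{\mathfrak{F}}(G)\cap N)$ together with Jordan--H\"{o}lder invariance and the fact that $\mathfrak{F}$-centrality of a chief factor above $N$ is the same computed in $G$ or in $G/N$; part $(2)$ via realizing an $H$-chief factor $U/V$ below $Z_{\mathfrak{F}}(G)\cap H$ as an $H$-invariant section $\bar U/\bar V$ of a $G$-chief factor $L/K$, checking the centralizer inclusion $H\cap C_G(L/K)\leq C_H(U/V)$, and then exhibiting $(U/V)\rtimes(H/C_H(U/V))$ as a quotient of the subgroup $\bar U\,Q_1$ of $(L/K)\rtimes\bigl(G/C_G(L/K)\bigr)\in\mathfrak{F}$ --- exactly the point where $S$-closure and quotient-closure are both needed. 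Two small remarks: first, you tacitly use that $Z_{\mathfrak{F}}(G)$, defined in the paper as the \emph{product} of all $\mathfrak{F}$-hypercentral normal subgroups, is itself $\mathfrak{F}$-hypercentral; this is a standard fact, and it is proved by the same second-isomorphism-plus-Jordan--H\"{o}lder argument you use in $(1)$, so it would cost you one sentence to make the proof fully self-contained. Second, your observation that saturation is never actually used (part $(1)$ needs only isomorphism-invariance of $\mathfrak{F}$-centrality, and part $(2)$ needs only that $\mathfrak{F}$ is an $S$-closed formation) is a genuine sharpening relative to the hypotheses as stated, which carry the saturation assumption only because the lemma is imported from \cite{Gu1}.
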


\begin{Lemma}\label{2.2} Let $H,K\leq G$ and $N\unlhd G$.

$(1)$ If $H$ is $S$-quasinormal in $G$, then $H$ is subnormal in $G$.

$(2)$ If $H$ is $S$-quasinormal in $G$, then $HN/N$ is $S$-quasinormal in $G/N$.

$(3)$ If $N\leq H$, then $H/N$ is $S$-quasinormal in $G/N$ if and only if $H$ is $S$-quasinormal in $G$.

$(4)$ If $H$ is $S$-quasinormal in $G$, then $H\cap K$ is $S$-quasinormal in $K$.

$(5)$ If $H$ is $S$-quasinormal in $G$, then $H/H_{G}$ is nilpotent.

$(6)$ If $H$ is a $p$-group, then $H$ is $S$-quasinormal in $G$ if and only if $O^{p}(G)\leq N_{G}(H)$.

$(7)$ If $H$ and $K$ are $S$-quasinormal in $G$, then $H\cap K$ is $S$-quasinormal in $G$.

\end{Lemma}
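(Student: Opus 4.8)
The plan is to take (1) and (5) as the classical structural inputs and to make the normalizer criterion (6) the workhorse from which the ``closure'' statements (2), (3), (4) and (7) follow. For (1) I would invoke Kegel's theorem that a subgroup permuting with all Sylow subgroups is subnormal, and for (5) (that $H/H_G$ is nilpotent) Deskins' theorem; after factoring out $H_G$ one reduces (5) to showing that an $S$-quasinormal subgroup with trivial core is nilpotent. For (6) I would argue both directions from (1). If $H$ is an $S$-quasinormal $p$-group and $q\neq p$, then $HG_q$ is a subgroup of order $|H||G_q|$ in which $H$ is a Sylow $p$-subgroup; being subnormal in $G$ by (1), $H$ is subnormal in $HG_q$ and therefore normal there (a subnormal Sylow subgroup is normal), so $G_q\le N_G(H)$, whence $O^p(G)=\langle G_q: q\neq p\rangle\le N_G(H)$. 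Conversely, if $O^p(G)\le N_G(H)$, choose $G_p\in\mathrm{Syl}_p(G)$ with $H\le G_p$; since $O^p(G)$ fixes $H$ and $G=O^p(G)G_p$, every conjugate of $H$ is a $G_p$-conjugate, so $H^G=\langle H^g:g\in G_p\rangle\le G_p$ is a normal $p$-subgroup, giving $H\le O_p(G)$, which lies in every Sylow $p$-subgroup; hence $H$ permutes with all of them.

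Parts (2) and (3) are then routine. Every Sylow subgroup of $G/N$ has the form $PN/N$ with $P\in\mathrm{Syl}_p(G)$, so $(HN/N)(PN/N)=HPN/N=PHN/N=(PN/N)(HN/N)$ using $HP=PH$; this gives (2). For (3), the forward implication is (2) together with the observation $HN=H$, while conversely $H/N$ being $S$-quasinormal yields $HPN/N=PHN/N$, and since $N\le H$ and $N\unlhd G$ one has $HPN=HP$ and $PHN=PH$, so $HP=PH$ and $H$ is $S$-quasinormal in $G$.

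The substance is in (4) and (7). For (4) with $H\le K$, choose $G_p\in\mathrm{Syl}_p(G)$ with $P:=G_p\cap K\in\mathrm{Syl}_p(K)$; the Dedekind modular law (valid since $H\le K$) gives $HG_p\cap K=HP$ and $G_pH\cap K=PH$, and as $HG_p=G_pH$ this forces $HP=PH$. When $H$ is a $p$-group and $K$ is arbitrary, (6) finishes the job at once: $O^p(K)\le O^p(G)\le N_G(H)$ and $O^p(K)\le K$, so $O^p(K)$ normalizes $H\cap K$ and (6) applies in $K$. The same idea gives (7) for $p$-groups, since $O^p(G)\le N_G(H)\cap N_G(K)\le N_G(H\cap K)$. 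The main obstacle is the fully general form of (4) and (7), where $H\cap K$ need not be a $p$-group and the single-prime criterion (6) no longer applies directly; here I would induct on $|G|$, passing to $G/N$ for a minimal normal subgroup $N$ via (2) and (3). The case $N\le H$ goes through cleanly—the modular law identifies $H\cap KN$ with $(H\cap K)N$ and $K\cap N\le H\cap K$ lets one cancel $N$—but the remaining cases require a careful choice of $N$ (using that $HN$ is again $S$-quasinormal) and some bookkeeping to pass from $HN\cap KN$ back to $H\cap K$. This bookkeeping, forced by the well-known failure of transitivity of $S$-quasinormality, is where the real effort lies; failing a clean argument I would cite the standard lattice property of $S$-quasinormal subgroups.
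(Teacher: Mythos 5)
Most of your write-up is correct, and it does substantially more than the paper, whose entire proof of this lemma is the citation to \cite[Lemma 1.2.7, Theorem 1.2.14, Lemma 1.2.16, Theorem 1.2.19]{Bal}: your direct arguments for (2), (3), your Dedekind argument for (4) when $H\leq K$, your $O^{p}$-criterion proof of (6) from (1), and your reduction of the $p$-group cases of (4) and (7) to (6) are all sound, and citing Kegel for (1), Deskins for (5), and Kegel's lattice theorem for (7) matches what the paper does. (Two small points: in the converse half of (6) you must also note that each Sylow $q$-subgroup of $G$ with $q\neq p$ lies in $O^{p}(G)\leq N_{G}(H)$ and hence permutes with $H$; and in the forward half one needs all conjugates of the $G_q$, which your argument does cover since $H$ permutes with every Sylow subgroup.) The genuine gap is the general case of (4). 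Your fallback --- ``cite the standard lattice property'' --- does not cover it: Kegel's lattice theorem says that the intersection (and join) of two subgroups each $S$-quasinormal \emph{in $G$} is again $S$-quasinormal \emph{in $G$}; it says nothing about $H\cap K$ being $S$-quasinormal \emph{in $K$} for an \emph{arbitrary} subgroup $K$, which is what (4) asserts and what the paper really uses (Lemma \ref{2.3}(3) applies (4) to $T\cap K$ where $K$ need not contain $T$). So (4) is left neither proved nor correctly cited.

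The gap can be closed with tools you already have. In your induction on $|G|$, the case you handled is when some minimal normal subgroup $N$ of $G$ lies in $H$; the remaining case is precisely $H_{G}=1$, and then (5) says $H$ is \emph{nilpotent} --- this is the missing observation. Write $H=\prod_{p}H_{p}$. Each $H_{p}$ is $S$-quasinormal in $G$: for $Q\in \mathrm{Syl}_{q}(G)$ with $q\neq p$, the group $W=HQ$ contains $H_{p}$ as a subnormal Sylow $p$-subgroup ($H_{p}$ is characteristic in $H$ and $H$ is subnormal in $W$ by (1), while $|W:H|$ is a $q$-number), so $H_{p}=O_{p}(W)\unlhd W$ and $Q\leq N_{G}(H_{p})$; ranging over all such $Q$ gives $O^{p}(G)\leq N_{G}(H_{p})$, and (6) applies. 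Your own $p$-group argument then makes each $H_{p}\cap K$ $S$-quasinormal in $K$, and $H\cap K=\langle H_{p}\cap K : p \rangle$ is $S$-quasinormal in $K$ by the \emph{join} half of Kegel's theorem applied inside $K$. Alternatively, a uniform direct proof avoids the induction entirely: given $Q\in\mathrm{Syl}_{q}(K)$, choose $Q_{1}\in\mathrm{Syl}_{q}(G)$ with $Q_{1}\cap K=Q$ and set $W=HQ_{1}$, $Y=W\cap K$; since $H$ is subnormal of $q$-power index in $W$ we get $O^{q}(W)\leq H$, hence $O^{q}(Y)\leq Y\cap O^{q}(W)\leq H\cap K$, and since $Q\in\mathrm{Syl}_{q}(Y)$ and $Y/O^{q}(Y)$ is a $q$-group, $(H\cap K)Q=O^{q}(Y)Q=Y=Q(H\cap K)$. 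Either repair (or the correct citation, \cite[Lemma 1.2.7]{Bal}) should replace the appeal to the lattice property.
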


\begin{proof}
See \cite[Lemma 1.2.7, Theorem 1.2.14, Lemma 1.2.16 and Theorem 1.2.19]{Bal}.
\end{proof}

\begin{Lemma}\label{2.3}  Let $H\leq K\leq G$ and $N\unlhd G$. Then:

$(1)$ If $H$ is weakly $\mathfrak{F}_{s}$-quasinormal in $G$ such that $(|H|,|N|)=1$, then $HN/N$ is  weakly $\mathfrak{F}_{s}$-quasinormal in $G/N$.

$(2)$ $H/N$ is weakly $\mathfrak{F}_{s}$-quasinormal in $G/N$ if and only if $H$ is weakly $\mathfrak{F}_{s}$-quasinormal in $G$.

$(3)$ If $\mathfrak{F}$ is $S$-closed and $H$ is weakly $\mathfrak{F}_{s}$-quasinormal in $G$, then $H$ is weakly $\mathfrak{F}_{s}$-quasinormal in $K$.

\end{Lemma}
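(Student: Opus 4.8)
The plan for all three parts is the same: start from an $S$-quasinormal subgroup $T$ of $G$ witnessing the weak $\mathfrak{F}_{s}$-quasinormality of $H$, transport it to the target group (a quotient or an intermediate subgroup) using the permutability properties collected in Lemma~\ref{2.2}, and then verify the $\mathfrak{F}$-hypercentre inclusion by pushing the hypothesis $(H\cap T)H_{G}/H_{G}\leq Z_{\mathfrak{F}}(G/H_{G})$ through the isomorphism theorems with the help of Lemma~\ref{2.1}. In each case transporting the two $S$-quasinormality conditions is immediate from Lemma~\ref{2.2}; the substance is a Dedekind-type computation of the intersection $H\cap T$ and of the core $H_{G}$ after the change of group.

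For part $(2)$ (where necessarily $N\leq H$, so that $H/N$ is defined) I first note that $N\leq H_{G}$, whence $(H/N)_{G/N}=H_{G}/N$. Assuming $H$ is weakly $\mathfrak{F}_{s}$-quasinormal in $G$ with witness $T$, I take $\overline{T}=TN/N$, which is $S$-quasinormal in $G/N$ by Lemma~\ref{2.2}(2), while $(H/N)\overline{T}=HT/N$ is $S$-quasinormal in $G/N$ by Lemma~\ref{2.2}(3) since $N\leq HT$. Dedekind's law (using $N\leq H$) gives $H\cap TN=(H\cap T)N$, so $(H/N)\cap\overline{T}=(H\cap T)N/N$; combining this with $(H/N)_{G/N}=H_{G}/N$ and the isomorphism $(G/N)/(H_{G}/N)\cong G/H_{G}$, the required inclusion is seen to be literally the hypothesis $(H\cap T)H_{G}/H_{G}\leq Z_{\mathfrak{F}}(G/H_{G})$. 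The converse is the same computation read backwards: write the witness in $G/N$ as $T/N$ with $N\leq T$ and use Lemma~\ref{2.2}(3) to pull $S$-quasinormality back to $G$.

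For part $(3)$ I set $T_{K}=T\cap K$, which is $S$-quasinormal in $K$ by Lemma~\ref{2.2}(4); since $H\leq K$, Dedekind's law gives $H(T\cap K)=HT\cap K$, again $S$-quasinormal in $K$ by Lemma~\ref{2.2}(4) applied to $HT$. Here $H\cap T_{K}=H\cap T$, and $H_{G}\leq H_{K}$ because $H_{G}\unlhd G$ lies in $H\leq K$. The hypercentre condition is then reached in two steps: first, as $\mathfrak{F}$ is $S$-closed and $K/H_{G}\leq G/H_{G}$, Lemma~\ref{2.1}(2) yields $(H\cap T)H_{G}/H_{G}\leq Z_{\mathfrak{F}}(G/H_{G})\cap(K/H_{G})\leq Z_{\mathfrak{F}}(K/H_{G})$; second, factoring out $H_{K}/H_{G}\unlhd K/H_{G}$ and applying Lemma~\ref{2.1}(1) together with $(K/H_{G})/(H_{K}/H_{G})\cong K/H_{K}$ transports this to $(H\cap T)H_{K}/H_{K}\leq Z_{\mathfrak{F}}(K/H_{K})$, as required.

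Part $(1)$ again uses the witness $\overline{T}=TN/N$, whose $S$-quasinormality, and that of $\overline{H}\,\overline{T}=HTN/N$, in $G/N$ is supplied by Lemma~\ref{2.2}(2). Coprimality gives $H\cap N=1$ and $HN/N\cong H$, and since $H_{G}N/N$ is a normal subgroup of $G/N$ contained in the core $(HN/N)_{G/N}$, Lemma~\ref{2.1}(1) lets me replace the core by $H_{G}N/N$; via the isomorphism $(G/N)/(H_{G}N/N)\cong G/H_{G}N$ and the image of the hypothesis under Lemma~\ref{2.1}(1), the whole problem then collapses to the single containment $HN\cap TN\subseteq(H\cap T)H_{G}N$, equivalently $H\cap TN\subseteq(H\cap T)H_{G}$. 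This is the step I expect to be the main obstacle: unlike in parts $(2)$ and $(3)$, $N$ is not contained in $H$, so Dedekind's law does not apply directly, and one must genuinely exploit the coprimality $(|H|,|N|)=1$ together with the full strength of the $S$-quasinormality of $T$ (not merely its subnormality from Lemma~\ref{2.2}(1)) to control the $\pi(H)$-elements of $TN$ that lie in $H$. Once this containment is established, the modular computation outlined above finishes the argument.
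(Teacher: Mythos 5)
Your arguments for parts (2) and (3) are correct and are essentially the paper's own proofs: in (2) both directions reduce, via $(H/N)_{G/N}=H_G/N$, Dedekind's law and Lemma~\ref{2.2}(2)(3), to re-reading the hypothesis through the canonical isomorphism; in (3) the paper combines Lemma~\ref{2.2}(4) with Lemma~\ref{2.1}(2) and then Lemma~\ref{2.1}(1) exactly as you do.

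Part (1), however, contains a genuine gap, and it sits precisely where you flagged it. You correctly reduce (1) to the containment $H\cap TN\leq (H\cap T)H_G$ (the corresponding step in the paper is the equality $HN\cap TN=(H\cap T)N$, which the paper dismisses as ``easy to see''), but you never prove it: announcing that one must ``genuinely exploit the coprimality together with the full strength of the $S$-quasinormality of $T$'' describes the difficulty without resolving it, and since transporting the two $S$-quasinormality conditions is routine, this containment \emph{is} the content of part (1). Here is how to close it. Set $X=HT$, a subgroup of $G$ because $HT$ is $S$-quasinormal. Since $H\leq X$, Dedekind's law gives $H\cap TN\leq X\cap TN=T(X\cap N)$, so it suffices to show $X\cap N\leq T$. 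Let $\pi=\pi(N)$. The index $|X:T|=|H:H\cap T|$ divides $|H|$ and hence is a $\pi'$-number, and $T$ is subnormal in $X$ by Lemma~\ref{2.2}(1); an easy induction on the subnormal defect (using that $O^{\pi'}(X)$ has no non-trivial $\pi'$-quotient) then yields $O^{\pi'}(X)\leq T$. As $X\cap N$ is a normal $\pi$-subgroup of $X$, its image in the $\pi'$-group $X/O^{\pi'}(X)$ is trivial, so $X\cap N\leq O^{\pi'}(X)\leq T$. Therefore $H\cap TN=H\cap T$, whence $HN\cap TN=(H\cap TN)N=(H\cap T)N$; this feeds into your (correct) modular computation and finishes (1). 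Note finally that your diagnosis of what is needed is slightly off: the argument above uses only the subnormality of $T$ together with the fact that $H$ and $T$ permute (i.e., that $HT$ is a subgroup); no permutability of $T$ with Sylow subgroups of $G$ beyond that enters at this step.
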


\begin{proof}

(1) Since $H$ is weakly $\mathfrak{F}_{s}$-quasinormal in $G$, $G$ has an $S$-quasinormal subgroup $T$ such that $HT$ is $S$-quasinormal in $G$ and $(H\cap T)H_G/H_{G}\leq Z_{\mathfrak{F}}(G/H_{G})$. It is easy to see that $HN\cap TN=(H\cap T)N$ for $(|H|,|N|)=1$. By Lemma \ref{2.2}(2), $TN/N$ and $HTN/N$ are $S$-quasinormal in $G/N$. Since $(H\cap T)H_G/H_{G}\leq Z_{\mathfrak{F}}(G/H_{G})$, $(H\cap T)(HN)_G/(HN)_G\leq Z_{\mathfrak{F}}(G/(HN)_G)$ by Lemma \ref{2.1}(1). This implies that $(HN/N\cap TN/N)(HN/N)_{G/N}/\\(HN/N)_{G/N}\leq Z_{\mathfrak{F}}((G/N)/(HN/N)_{G/N})$. Hence $HN/N$ is weakly $\mathfrak{F}_{s}$-quasinormal  in $G/N$.

(2) First suppose that $H/N$ is weakly $\mathfrak{F}_{s}$-quasinormal in $G/N$. Then $G/N$ has an $S$-quasinormal subgroup $T/N$ such that $(H/N)(T/N)$ is $S$-quasinormal in $G/N$ and $((H/N)\cap (T/N))(H/N)_{G/N}/(H/N)_{G/N}\leq Z_{\mathfrak{F}}((G/N)/(H/N)_{G/N})$. It follows that $T$ and $HT$ are $S$-quasinormal in $G$ by Lemma \ref{2.2}(3) and $(H\cap T)H_G/H_{G}\leq Z_{\mathfrak{F}}(G/H_{G})$. Hence $H$ is weakly $\mathfrak{F}_{s}$-quasinormal in $G$.
Now assume that $H$ is weakly $\mathfrak{F}_{s}$-quasinormal in $G$. Then a similar argument as in (1) shows that $H/N$ is weakly $\mathfrak{F}_{s}$-quasinormal in $G/N$.

(3) As $H$ is weakly $\mathfrak{F}_{s}$-quasinormal in $G$, $G$ has an $S$-quasinormal subgroup $T$ such that $HT$ is $S$-quasinormal in $G$ and $(H\cap T)H_G/H_{G}\leq Z_{\mathfrak{F}}(G/H_{G})$. Then $T\cap K$ and $H(T\cap K)$ are $S$-quasinormal in $K$ by Lemma \ref{2.2}(4). By Lemma \ref{2.1}(2), $(H\cap T)H_G/H_{G}\leq Z_{\mathfrak{F}}(G/H_{G})\cap (K/H_G)\leq Z_{\mathfrak{F}}(K/H_{G})$, and so $(H\cap T)H_K/H_{K}\leq Z_{\mathfrak{F}}(K/H_{K})$ by Lemma 2.1(1). Therefore, $H$ is weakly $\mathfrak{F}_{s}$-quasinormal in $K$.\end{proof}

\begin{Lemma}\label{2.6} \textup{\cite [Main Theorem]{FG}} Suppose that $G$ has a Hall $\pi$-subgroup and $2\notin \pi$. Then all the Hall $\pi$-subgroups are conjugate in $G$.
\end{Lemma}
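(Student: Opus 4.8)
The plan is to induct on $|G|$ and study a counterexample $G$ of least order to the implication that property $E_\pi$ (possessing a Hall $\pi$-subgroup) forces property $C_\pi$ (all Hall $\pi$-subgroups conjugate), over all groups with $2\notin\pi$. The first, routine, observations are that $E_\pi$ passes to normal subgroups and quotients: if $H$ is a Hall $\pi$-subgroup of $G$ and $N\unlhd G$, then $H\cap N$ is a Hall $\pi$-subgroup of $N$ and $HN/N$ one of $G/N$, because in each case the index in question divides the $\pi'$-number $[G:H]$. Consequently, in the minimal counterexample every proper section of $G$ inherits $E_\pi$ and hence, by minimality, also $C_\pi$.

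The heart of the argument is a Frattini reduction showing that $G$ cannot have a proper nontrivial normal subgroup, so that $G$ must be simple. Suppose $1\neq N\unlhd G$ with $N\neq G$, and let $H_1,H_2$ be Hall $\pi$-subgroups of $G$. Since $C_\pi$ holds in $G/N$, we may assume $H_1N=H_2N=:M$; then the $H_i$ are Hall $\pi$-subgroups of $M$ and the $H_i\cap N$ are Hall $\pi$-subgroups of $N$, so using $C_\pi$ in $N$ we may further arrange $K:=H_1\cap N=H_2\cap N$. As $H_i$ normalizes both $N$ and $H_i$, it normalizes $K$, and a Frattini argument gives $M=N\,N_M(K)$ with $H_1,H_2\le N_M(K)$. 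If $N_M(K)<M$ the two subgroups are conjugate by induction; if $N_M(K)=M$ then $K\unlhd M$ and, passing to $M/K$, the subgroups $H_i/K$ are complements to the normal $\pi'$-subgroup $N/K$. Here the hypothesis $2\notin\pi$ is essential: each $H_i/K$ has odd order, hence is soluble by the Feit--Thompson theorem, so the Schur--Zassenhaus theorem guarantees these complements are conjugate. In every case $H_1$ and $H_2$ are conjugate in $G$, contradicting the choice of $G$.

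It remains to verify that every finite simple group $S$ with $2\notin\pi$ satisfies $E_\pi\Rightarrow C_\pi$, and this is the step where the real difficulty lies. There is no uniform argument: one must appeal to the classification of finite simple groups and treat the families separately -- alternating groups, the groups of Lie type in both defining and cross characteristic, and the finitely many sporadic groups -- determining in each case exactly which odd sets $\pi$ admit Hall subgroups and then proving their conjugacy, often by working inside $\mathrm{Aut}(S)$ and invoking the solubility of $\mathrm{Out}(S)$ (the Schreier conjecture) to transfer conjugacy back to $S$. I expect the Lie-type analysis, where Hall subgroups are assembled from tori and parabolic subgroups and their conjugacy is controlled by Weyl-group and field-automorphism data, to be by far the main obstacle; this case-by-case verification is precisely the substance of the cited theorem.
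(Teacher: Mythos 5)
Your reduction is sound: the minimal-counterexample argument (passing $E_\pi$ to $N$ and to $G/N$, aligning $H_1N=H_2N$ and then $H_1\cap N=H_2\cap N$ by minimality, the Frattini step $M=N\,N_M(K)$ with $H_1,H_2\le N_M(K)$, and the final dichotomy handled by minimality when $N_M(K)<M$ and by Feit--Thompson plus Schur--Zassenhaus when $K\unlhd M$) is correct in every detail, and it does show that a minimal counterexample must be simple. This matches the classical first stage of the proof of the cited result. Note, however, that the paper itself offers no proof of this lemma: it is imported verbatim as the Main Theorem of Gross \cite{FG}, so the only ``proof'' in the paper is the citation.

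The genuine gap is that your argument stops exactly where the theorem starts. The claim that every finite simple group $S$ with $2\notin\pi$ satisfies $E_\pi\Rightarrow C_\pi$ is not a remaining routine verification; it is the entire mathematical content of Gross's theorem, it is known only through the classification of finite simple groups, and your final paragraph describes a plan for it rather than a proof: no determination of which odd sets $\pi$ admit Hall subgroups in the alternating groups or in $\mathrm{PSL}_n(q)$, no torus/parabolic analysis for Lie type, no sporadic check, and no actual use of the Schreier conjecture is carried out. As written, the proposal establishes only the (elementary and known) implication ``if the statement holds for all simple groups, then it holds for all groups,'' which cannot stand as a proof of the lemma itself. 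In the context of this paper the correct move is the one the authors make: cite \cite{FG} rather than attempt a self-contained proof.
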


Recall that a group $G$ is called $\pi$-closed if $G$ has a normal Hall $\pi$-subgroup. Moreover, a group $G$ is said to be a $C_\pi$-group if $G$ has a Hall $\pi$-subgroup and any two Hall $\pi$-subgroups of $G$ are conjugate in $G$.\par

\begin{Lemma}\label{2.4} \textup{\cite[Corollary 3.7]{Guo}} Let $P$ be a $p$-subgroup of $G$. Suppose that $G$ is a $C_\pi$-group with $p\notin \pi$. If every maximal subgroup of $P$ has a $\pi$-closed supplement in $G$, then $G$ is $\pi$-closed.\end{Lemma}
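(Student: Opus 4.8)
The plan is to argue by induction on $|G|$, taking $G$ to be a counterexample of least order, and to use freely two background facts: the class of $\pi$-closed groups is a saturated formation (closed under quotients and subdirect products, with $G$ $\pi$-closed whenever $G/\Phi(G)$ is), and a homomorphic image of a $C_\pi$-group is again a $C_\pi$-group. First I would verify that the hypotheses are inherited by every proper quotient: if $N\unlhd G$ then $G/N$ is a $C_\pi$-group, $PN/N$ is a $p$-subgroup of $G/N$, and (the degenerate possibility $P\le N$ being disposed of directly) each maximal subgroup of $PN/N$ is $MN/N$ for a maximal subgroup $M$ of $P$ with $P\cap N\le M$; a $\pi$-closed supplement $H$ of such an $M$ yields the $\pi$-closed supplement $HN/N$ of $MN/N$ in $G/N$. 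Thus $G/N$ is $\pi$-closed for each minimal normal $N$. If $G$ had two distinct minimal normal subgroups it would embed subdirectly in a product of two $\pi$-closed groups, hence be $\pi$-closed; and if the unique minimal normal subgroup $N$ lay in $\Phi(G)$, saturation would finish. So I may assume $G$ has a unique minimal normal subgroup $N$ with $N\not\le\Phi(G)$, giving $G=N\rtimes L$ for a maximal $L$. If $N$ were a $\pi$-group, the preimage of the normal Hall $\pi$-subgroup of $G/N$ would be one for $G$; hence $N$ is a $\pi'$-group.

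Next I would extract the arithmetic of the supplement condition. Fix a maximal subgroup $M$ of $P$ with $\pi$-closed supplement $H$, so $G=MH$. Since $M$ is a $p$-group and $p\notin\pi$, the index $|G:H|=|M:M\cap H|$ is a power of $p$, while the normal Hall $\pi$-subgroup $Q$ of $H$ satisfies $|Q|=|H|_\pi=|G|_\pi$ and so is a Hall $\pi$-subgroup of $G$ with $H\le N_G(Q)$. Hence $n_\pi:=|G:N_G(Q)|$, the number of Hall $\pi$-subgroups, divides $|G:H|$ and is a power of $p$. By induction $G/N$ is $\pi$-closed; as $N$ is a $\pi'$-group, the preimage of its normal Hall $\pi$-subgroup is $R:=NQ\unlhd G$ with $Q$ a Hall $\pi$-subgroup of $G$, and the Frattini argument applied to $Q$ in $R$ gives $G=N\,N_G(Q)$. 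Now $[N,Q]\unlhd NQ$ is normalized by $N$ and, since $N_G(Q)$ normalizes both $N$ and $Q$, by $N_G(Q)$; thus $[N,Q]\unlhd G$, and minimality of $N$ forces $[N,Q]=1$ or $[N,Q]=N$. In the first case $Q$ centralizes $N$, so $Q\unlhd R$; being a normal Hall $\pi$-subgroup $Q$ is characteristic in $R\unlhd G$, whence $Q\unlhd G$ and $G$ is $\pi$-closed, contrary to choice. So I am reduced to $[N,Q]=N$.

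Ruling out $[N,Q]=N$ is the step I expect to be the main obstacle. By coprime action of the $\pi$-group $Q$ on the $\pi'$-group $N$, the condition $[N,Q]=N$ is equivalent to $C_N(Q)=1$, i.e. $Q$ acts without nontrivial fixed points. When $N$ is a $q$-group with $q\ne p$ this is immediate: then $n_\pi=|G:N_G(Q)|=|N:C_N(Q)|$ is a power of $q$, yet it is also a power of $p$, forcing $n_\pi=1$ and $C_N(Q)=N\ne1$, a contradiction. The real difficulty is when $p\mid|N|$, above all when $N$ is an elementary abelian $p$-group, so that $N\le P$. Here the plan is to produce a $\pi$-closed supplement $H$ of some maximal subgroup of $P$ with $N\cap H\ne1$: since $N\cap H\unlhd H$ and the normal Hall $\pi$-subgroup $Q$ of $H$ has order coprime to $|N\cap H|$, the group $Q$ centralizes $N\cap H$, giving $C_N(Q)\ne1$ and hence $[N,Q]\ne N$. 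The delicate point, which is precisely where the hypothesis that \emph{every} maximal subgroup of $P$ has a $\pi$-closed supplement, the maximality of $M$ in $P$, and the containment $N\le P$ must be exploited in full, is to guarantee that at least one such supplement meets $N$ nontrivially, i.e. to exclude the fixed-point-free configuration. Once this is achieved, $[N,Q]=1$, $Q\unlhd G$, and $G$ is $\pi$-closed, completing the induction.
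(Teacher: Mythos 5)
This lemma is nowhere proved in the paper: it is imported verbatim from \cite[Corollary 3.7]{Guo}, so your attempt must be judged on its own correctness, and as written it is not a complete proof. The opening reductions (inheritance of the hypotheses by quotients, uniqueness of the minimal normal subgroup $N$ with $N\not\le\Phi(G)$, the Frattini argument $G=NN_G(Q)$, the dichotomy $[N,Q]\in\{1,N\}$, and the arithmetic disposing of the case where $N$ is a $q$-group with $q\ne p$) are the routine part; the entire content of the lemma is the case you yourself call ``the main obstacle'' and then leave open. Writing that ``the plan is to produce a $\pi$-closed supplement $H$ of some maximal subgroup of $P$ with $N\cap H\ne 1$'' and that ``the delicate point \dots is to guarantee that at least one such supplement meets $N$ nontrivially'' is a restatement of what has to be proved, not an argument: nothing in your text excludes the configuration in which every $\pi$-closed supplement of every maximal subgroup of $P$ intersects $N$ trivially, and that exclusion is precisely where the hypotheses (the quantifier over \emph{all} maximal subgroups of $P$, maximality inside $P$, and the $C_\pi$-assumption) must do real work.

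Moreover, several steps you do carry out are wrong at the stated level of generality. (i) You infer that $N$ is a $\pi'$-group from the fact that $N$ is not a $\pi$-group; since you never establish solubility, $N$ may be a direct product of non-abelian simple groups whose order is divisible by primes both inside and outside $\pi$, and then your subsequent case division (``$N$ a $q$-group'' versus ``$N$ elementary abelian $p$-group'') does not cover all cases. (ii) Even granting that $N$ is an elementary abelian $p$-group, the containment $N\le P$ does not follow: $P$ is an arbitrary $p$-subgroup of $G$, not a Sylow $p$-subgroup, so $N\le O_p(G)$ places $N$ inside every Sylow $p$-subgroup but not inside $P$. (iii) The equivalence ``$[N,Q]=N\iff C_N(Q)=1$'' is the Fitting decomposition $N=[N,Q]\times C_N(Q)$ for a coprime action on an \emph{abelian} group and fails for non-abelian $N$. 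Finally, the Frattini step $G=NN_G(Q)$ needs conjugacy of Hall $\pi$-subgroups inside $R=NQ$, not merely in $G$; this can be repaired via Schur--Zassenhaus together with Feit--Thompson once $N$ is known to be a $\pi'$-group, but that again hinges on (i). In sum, the proposal rests on reductions that are unjustified as stated and omits the essential idea, so it does not constitute a proof of the lemma.
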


The next lemma is clear.

\begin{Lemma}\label{2.5} Let $p$ be a prime divisor of $|G|$ with $(|G|,p-1)=1$.

$(1)$ If $G$ has cyclic Sylow $p$-subgroups, then $G\in \frak{N}_p$.\par
$(2)$ If $N$ is a normal subgroup of $G$ such that $|N|_p\leq p$ and $G/N\in \frak{N}_p$, then $G\in \frak{N}_p$.\par
\end{Lemma}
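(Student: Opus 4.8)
The plan is to deduce both parts from the classical Burnside normal $p$-complement theorem, reducing part (2) to part (1).

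For part (1), I would let $P$ be a cyclic Sylow $p$-subgroup of $G$. Since $P$ is abelian, $P\leq C_G(P)\leq N_G(P)$, so $P$ is a Sylow $p$-subgroup of $N_G(P)$ contained in $C_G(P)$; hence $N_G(P)/C_G(P)$ is a $p'$-group. As $N_G(P)/C_G(P)$ embeds into $\mathrm{Aut}(P)$ and $|\mathrm{Aut}(P)|=p^{n-1}(p-1)$ for $|P|=p^{n}$ (this is where cyclicity of $P$ is essential), the order of $N_G(P)/C_G(P)$ divides $p-1$. But it also divides $|G|$, so the hypothesis $(|G|,p-1)=1$ forces $N_G(P)=C_G(P)$, i.e. $P\leq Z(N_G(P))$. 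Burnside's theorem then gives that $G$ has a normal $p$-complement, so $G\in\mathfrak{N}_p$.

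For part (2), I would first observe that $(|L|,p-1)=1$ for every subgroup $L\leq G$, since $|L|$ divides $|G|$. Because $G/N\in\mathfrak{N}_p$, the group $G/N$ has a normal $p$-complement $L/N$ with $L\unlhd G$, so that $L/N$ is a $p'$-group and $G/L$ is a $p$-group. Then $|L|_p=|N|_p\leq p$, so $L$ has cyclic Sylow $p$-subgroups, and part (1) applies to show that $L$ is $p$-nilpotent. Letting $K$ be the normal $p$-complement of $L$, we have $K=O_{p'}(L)$, which is characteristic in $L$; since $L\unlhd G$, this gives $K\unlhd G$. A short order count shows $|K|=|G|_{p'}$ and that $G/K$ is a $p$-group, whence $K$ is a normal $p$-complement of $G$ and $G\in\mathfrak{N}_p$.

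The verifications involved (that $N_G(P)/C_G(P)$ is a $p'$-group, the order of $\mathrm{Aut}(P)$ for cyclic $P$, and the final index computation in part (2)) are routine. The one step needing slight care is passing in part (2) from the $p$-nilpotent subgroup $L$ back to $G$: one must exhibit a normal $p$-complement of the whole group rather than merely of $L$, which is achieved by promoting the characteristic $p$-complement of $L$ to a normal subgroup of $G$ and checking that its index is a power of $p$. Beyond this I anticipate no real obstacle, consistent with the authors' remark that the lemma is clear.
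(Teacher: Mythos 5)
Your proposal is correct and complete: the paper itself offers no proof (the authors simply declare the lemma clear), and your argument --- part (1) via the embedding of $N_G(P)/C_G(P)$ into $\mathrm{Aut}(P)$ of order $p^{n-1}(p-1)$ together with Burnside's normal $p$-complement theorem, and part (2) by pulling back along the normal $p$-complement $L/N$ of $G/N$ and promoting the characteristic $p$-complement of $L$ to a normal $p$-complement of $G$ --- is precisely the standard argument the authors implicitly invoke. The only point worth a remark is the degenerate case $|N|_p=1$ in part (2), where $L$ is a $p'$-group and is trivially its own normal $p$-complement rather than an instance of part (1); this does not affect correctness.
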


\begin{Lemma}\label{2.7} \textup{\cite [Lemma 2.16]{AN}} Let $\mathfrak{F}$ be a saturated formation containing $\frak{U}$. Suppose that $N\unlhd G$ such that $G/N\in \mathfrak{F}$. If $N$ is cyclic, then $G \in \mathfrak{F}$.
\end{Lemma}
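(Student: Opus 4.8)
The plan is to reduce the statement to a hypercentre computation: I will show that the cyclic normal subgroup $N$ is contained in $Z_{\mathfrak{F}}(G)$, and then use the saturation of $\mathfrak{F}$ to deduce $G\in\mathfrak{F}$ from $G/N\in\mathfrak{F}$.

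First I would prove that $N\le Z_{\mathfrak{U}}(G)$, and hence, since $\mathfrak{U}\subseteq\mathfrak{F}$, that $N\le Z_{\mathfrak{F}}(G)$. To this end, refine $1\unlhd N$ to a chief series of $G$ and consider any chief factor $L/K$ with $K<L\le N$. As $N$ is cyclic, $L/K$ is cyclic, and being a chief factor it is elementary abelian, hence of prime order $q$; consequently $G/C_G(L/K)$ embeds in $\mathrm{Aut}(L/K)\cong C_{q-1}$ and is therefore cyclic. The group $L/K\rtimes G/C_G(L/K)$ thus has a normal cyclic subgroup of prime order with cyclic quotient, so it is supersoluble, i.e.\ it lies in $\mathfrak{U}$. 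Hence every chief factor of $G$ below $N$ is $\mathfrak{U}$-central, giving $N\le Z_{\mathfrak{U}}(G)$; and since any $\mathfrak{U}$-central factor is $\mathfrak{F}$-central (because $\mathfrak{U}\subseteq\mathfrak{F}$), we obtain $N\le Z_{\mathfrak{F}}(G)$.

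The remaining, and genuinely substantive, step is the implication: if $N\le Z_{\mathfrak{F}}(G)$ and $G/N\in\mathfrak{F}$, then $G\in\mathfrak{F}$. Here I would invoke the Gasch\"utz--Lubeseder--Schmid theorem: being saturated, $\mathfrak{F}$ admits a canonical (integrated, full) local definition $F$, so that $G\in\mathfrak{F}$ may be tested chief factor by chief factor through the conditions $G/C_G(H/K)\in F(p)$ for every $p$-chief factor $H/K$. Refining a chief series through $N$, the factors $H/K$ with $N\le K$ satisfy this condition because $G/N\in\mathfrak{F}$ (passing between $G$ and $G/N$ via $C_{G/N}\big((H/N)/(K/N)\big)=C_G(H/K)/N$), while for the factors below $N$ the required local condition is exactly the standard reformulation of the $\mathfrak{F}$-centrality established in the first step. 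Since the local test is independent of the chosen chief series, this yields $G\in\mathfrak{F}$.

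I expect the main obstacle to be precisely this last translation of ``$N\le Z_{\mathfrak{F}}(G)$ and $G/N\in\mathfrak{F}$'' into membership $G\in\mathfrak{F}$, as it is the only place where the saturation hypothesis is used essentially and it relies on the local-formation machinery rather than on elementary manipulations. If one prefers to avoid local formations, an alternative is induction on $|N|$: for a prime $p$ dividing $|N|$ the order-$p$ subgroup $L$ of the cyclic group $N$ is characteristic in $N$, hence normal in $G$, and applying the inductive hypothesis to $G/L$ (with $N/L$ cyclic and $(G/L)/(N/L)\cong G/N\in\mathfrak{F}$) reduces to $|N|=p$; the Frattini case $N\le\Phi(G)$ then follows at once from saturation, since $G/\Phi(G)$ is a quotient of $G/N$, but the complemented case still requires converting the $\mathfrak{F}$-centrality of the prime-order factor $N$ into $G\in\mathfrak{F}$, so the same difficulty reappears.
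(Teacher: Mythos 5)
Your proof is correct. Note that the paper itself gives no proof of this lemma: it is quoted from Skiba \cite[Lemma 2.16]{AN}, and your argument --- a cyclic normal subgroup has all its $G$-chief factors of prime order, hence lies in $Z_{\mathfrak{U}}(G)\leq Z_{\mathfrak{F}}(G)$ since $\mathfrak{U}\subseteq\mathfrak{F}$, after which $N\leq Z_{\mathfrak{F}}(G)$ together with $G/N\in\mathfrak{F}$ and saturation gives $G\in\mathfrak{F}$ --- is essentially the standard proof of the cited result. Your final step need not be rebuilt from the Gasch\"utz--Lubeseder--Schmid machinery each time; it is a quotable theorem on the $\mathfrak{F}$-hypercentre of a saturated formation (see, e.g., Doerk and Hawkes, \emph{Finite Soluble Groups}, Chapter IV, Section 6), so your write-up is, if anything, more detailed than what the literature would demand.
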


\section{\bf Main Results}
\vskip 0.4 true cm

\begin{Lemma}\label{1M} Let $P$ be a Sylow $p$-subgroup of $G$, where $p$ is a prime divisor of $|G|$ with $(|G|,p-1)=1$. If every maximal subgroup of $P$ either is weakly ${(\mathfrak{U}_p)}_{s}$-quasinormal or has a $p$-nilpotent supplement in $G$, then $G\in \frak{N}_p$.
\end{Lemma}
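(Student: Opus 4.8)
The plan is to argue by induction on $|G|$, taking $G$ to be a counterexample of least order. First I would reduce to the case $O_{p'}(G)=1$: if $O_{p'}(G)\neq 1$, set $\overline{G}=G/O_{p'}(G)$ with Sylow $p$-subgroup $\overline{P}=PO_{p'}(G)/O_{p'}(G)$, so that each maximal subgroup of $\overline{P}$ has the form $MO_{p'}(G)/O_{p'}(G)$ for $M$ maximal in $P$. Lemma \ref{2.3}(1) (applicable since $(|M|,|O_{p'}(G)|)=1$) transfers the weakly $(\mathfrak{U}_p)_s$-quasinormality, while images of $p$-nilpotent supplements are $p$-nilpotent supplements; hence $\overline{G}$ inherits the hypotheses and is $p$-nilpotent by minimality, forcing $G\in\mathfrak{N}_p$, a contradiction. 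I would also note that $|P|>p$, since otherwise $P$ is cyclic and Lemma \ref{2.5}(1) already gives $G\in\mathfrak{N}_p$.

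Next I would dispose of the case in which \emph{every} maximal subgroup of $P$ has a $p$-nilpotent supplement. Fixing such a supplement $K$ of a maximal subgroup $M$, the index $|G:K|$ divides $|M|$ and so is a $p$-power; thus the normal Hall $p'$-subgroup of the $p$-nilpotent group $K$ is in fact a Hall $p'$-subgroup of $G$, and $G$ possesses Hall $p'$-subgroups. These are conjugate — by Lemma \ref{2.6} when $p=2$ (so $2\notin p'$), and because $G$ is soluble when $p$ is odd (then $(|G|,p-1)=1$ forces $|G|$ odd and the Odd Order Theorem applies) — so $G$ is a $C_{p'}$-group. Since a $p$-nilpotent group is exactly a $p'$-closed group, Lemma \ref{2.4} (with $\pi=p'$) yields that $G$ is $p'$-closed, i.e. $p$-nilpotent, a contradiction. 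Therefore some maximal subgroup $M$ of $P$ must be weakly $(\mathfrak{U}_p)_s$-quasinormal.

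For such an $M$ I would take the $S$-quasinormal $T$ with $MT$ $S$-quasinormal and $(M\cap T)M_G/M_G\leq Z_{\mathfrak{U}_p}(G/M_G)$, aiming to produce a normal subgroup $N$ of $G$ with $|N|=p$. The conceptual point is that any minimal normal subgroup of $G$ lying in $Z_{\mathfrak{U}_p}(G)$ has order $p$: since $O_{p'}(G)=1$ it cannot be a $p'$-group, and $\mathfrak{U}_p$-centrality forces a $p$-chief factor to be cyclic of order $p$. Thus it suffices to show $Z_{\mathfrak{U}_p}(G)\neq 1$. Using that $M$ is maximal in $P$, so $MT\cap P\in\{M,P\}$, I would run the dichotomy on the $S$-quasinormal subgroups $T$ and $MT$ (invoking Lemma \ref{2.2}, in particular subnormality, $T\cap P\in\mathrm{Syl}_p(T)$, and part (6)) to argue that when $M_G=1$ a nontrivial $M\cap T$ places a nontrivial $p$-subgroup into $Z_{\mathfrak{U}_p}(G)$, and that $M_G\neq 1$ likewise supplies a nontrivial normal $p$-subgroup inside $Z_{\mathfrak{U}_p}(G)$; either way $Z_{\mathfrak{U}_p}(G)\neq 1$.

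Once a normal $N$ with $|N|=p$ is found, I would pass to $G/N$: each maximal subgroup of $P/N$ is $M/N$ for a maximal subgroup $M\supseteq N$ of $P$, where Lemma \ref{2.3}(2) transfers weak $(\mathfrak{U}_p)_s$-quasinormality and $KN/N$ is a $p$-nilpotent supplement of $M/N$ whenever $K$ is one of $M$; as $(|G/N|,p-1)=1$ and $p\mid|G/N|$, minimality gives $G/N\in\mathfrak{N}_p$, whence $G\in\mathfrak{N}_p$ by Lemma \ref{2.5}(2) — the final contradiction. I expect the main obstacle to be precisely the extraction in the previous paragraph: controlling the interaction of $M$, $T$ and $MT$ with $P$ tightly enough to guarantee $Z_{\mathfrak{U}_p}(G)\neq 1$, and in particular reconciling the subcases $M_G=1$ and $M_G\neq 1$ (and the possibility that $G$ is insoluble when $p=2$) so that the nontrivial $p$-subgroup produced genuinely lands in the $\mathfrak{U}_p$-hypercentre rather than merely in $O_p(G)$.
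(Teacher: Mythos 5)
Your opening reductions are sound and run parallel to the paper's: the quotient by $O_{p'}(G)$ via Lemma \ref{2.3}(1), and the disposal of the case in which every maximal subgroup of $P$ has a $p$-nilpotent supplement (Hall $p'$-subgroups exist, are conjugate by Lemma \ref{2.6} when $p=2$ and by solubility of odd-order groups when $p>2$, so Lemma \ref{2.4} applies). The genuine gap is exactly where you flagged it, and it is not merely technical: your extraction step rests on a false implication. In the branch $M_G\neq 1$ you assert that one obtains a nontrivial normal $p$-subgroup \emph{inside} $Z_{\mathfrak{U}_p}(G)$. But weak $(\mathfrak{U}_p)_s$-quasinormality only places $(M\cap T)M_G/M_G$ in $Z_{\mathfrak{U}_p}(G/M_G)$, which says nothing about $Z_{\mathfrak{U}_p}(G)$, and a nontrivial normal $p$-subgroup never by itself forces the hypercentre to be nontrivial: $O_2(A_4)=V_4\neq 1$ while $Z_{\mathfrak{U}_2}(A_4)=1$, because the chief factor $V_4$ has order $4$. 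Your other branch is also not closed: if $M_G=1$ and $M\cap T=1$ with $T\neq 1$, the dichotomy gives $|T|=p$ and $P=MT$ $S$-quasinormal, hence subnormal, hence normal in $G$; a normal Sylow $p$-subgroup is no contradiction to non-$p$-nilpotence and contributes nothing to the hypercentre. So the central claim ``$Z_{\mathfrak{U}_p}(G)\neq 1$'' cannot be derived from a single weakly quasinormal maximal subgroup, and your induction via a normal subgroup of order $p$ never gets started.

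The missing idea --- and the paper's route --- is to establish the structure of the minimal counterexample \emph{before} analyzing any weakly quasinormal subgroup, so that the hypercentre condition is exploited negatively rather than positively. After $O_{p'}(G)=1$ one proves $G$ soluble (for $p=2$ this needs a separate argument, which you omit), so every minimal normal subgroup $N$ is an elementary abelian $p$-group and lies in $P$; then Lemma \ref{2.3}(2) transfers the hypothesis to $G/N$ \emph{without any restriction on} $|N|$, and minimality plus the fact that $\mathfrak{N}_p$ is a saturated formation yields: $N$ is the unique minimal normal subgroup, $\Phi(G)=1$, $G=N\rtimes M$ with $M\in\mathfrak{N}_p$, $N=C_G(N)=O_p(G)$, and $|N|>p$ (else Lemma \ref{2.5}(2) finishes). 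With this structure, any maximal $P_1$ of $P$ with $N\nleq P_1$ automatically has $(P_1)_G=1$ (so your problematic branch disappears), and $Z_{\mathfrak{U}_p}(G)=1$ (else $N\leq Z_{\mathfrak{U}_p}(G)$ forces $|N|=p$); your dichotomy on $T$ then does terminate, because now $P=P_1T=N$ is abelian, whence $T=T^P=T^G\unlhd G$ by Lemma \ref{2.2}(6) and $N\leq T$ gives $|N|=p$, a contradiction. Thus every maximal subgroup of $P$ ends up with a $p$-nilpotent supplement ($M$ supplements those containing $N$), and your own Lemma \ref{2.4} argument delivers the final contradiction. In short: the paper proves the hypercentre is trivial and uses that to kill the weakly quasinormal alternative; you tried to prove it is nontrivial and build on it, which is the step that fails.
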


\begin{proof} Suppose that the result is false and let $G$ be a counterexample of minimal order. Then:

(1) $O_{p'}(G)=1$.

If $O_{p'}(G)>1$, then by Lemma \ref{2.3}(1), $G/O_{p'}(G)$ satisfies the hypothesis of the lemma. The choice of $G$ implies that $G/O_{p'}(G)\in \frak{N}_p$, and so $G\in \frak{N}_p$, a contradiction.

(2) \textit{$G$ is soluble.}

Assume that $G$ is not soluble. Then $p=2$ by the Feit-Thompson theorem.
If $O_{2}(G)>1$, then $G/O_{2}(G)$ satisfies the hypothesis of the lemma by Lemma \ref{2.3}(2). The choice of $G$ implies that $G/O_{2}(G)\in \frak{N}_2$. Thus $G$ is soluble. This contradiction shows that $O_{2}(G)=1$. If every maximal subgroup of $P$ has a $2$-nilpotent supplement in $G$, then $G$ has a Hall $2'$-subgroup. By Lemma \ref{2.6}, $G$ is a $C_{2'}$-group, and so $G\in \frak{N}_2$ by Lemma \ref{2.4}, which is impossible. Therefore, $P$ has a maximal subgroup $P_1$ that is weakly ${(\mathfrak{U}_2)}_{s}$-quasinormal in $G$.
Then $G$ has an $S$-quasinormal subgroup $T$ such that $P_{1}T$ is $S$-quasinormal in $G$ and $(P_{1}\cap T)(P_{1})_{G}/(P_{1})_{G}\leq Z_{\mathfrak{U}_2}(G/(P_1)_{G})$. Clearly, $(P_{1})_{G}\leq O_{2}(G)=1$. Then we have that $P_{1}\cap T\leq Z_{\mathfrak{U}_2}(G)$. Since $O_{2}(G)=O_{2'}(G)=1$ by (1), $Z_{\mathfrak{U}_2}(G)=1$, and so $P_{1}\cap T=1$. This implies that $|T|_{2}\leq 2$. Then by Lemma \ref{2.5}(1), $T\in \frak{N}_2$, and consequently $T\leq O_{2',2}(G)=1$ by Lemma \ref{2.2}(1). Thus $P_{1}$ is $S$-quasinormal in $G$. By Lemma \ref{2.2}(1) again, $P_1\leq O_2(G)=1$, and so $|G|_2\leq 2$. It follows that $G$ is soluble, a contradiction.

(3) \textit{$G$ has a unique minimal normal subgroup $N$, $G/N\in \frak{N}_p$ and $G=N\rtimes M$, where $M$ is a maximal subgroup of $G$. Moreover, $N=O_{p}(G)$ and $|N|>p$.}

Let $N$ be a minimal normal subgroup of $G$. Then by (1) and (2), $N$ is an elementary abelian $p$-group. By Lemma \ref{2.3}(2), the hypothesis of the lemma still holds for $G/N$. By the choice of $G$, $G/N\in \frak{N}_p$. Evidently, $N$ is the unique minimal normal subgroup of $G$ and $\Phi(G)=1$. Thus there exists a maximal subgroup $M$ of $G$ such that $G=N\rtimes M$. Since $C_G(N)\cap M=1$, $N=C_{G}(N)$, and thereby $N=O_p(G)$. If $|N|=p$, then by Lemma \ref{2.5}(2), $G\in \frak{N}_p$, a contradiction. Hence $|N|>p$.

(4) \textit{Final contradiction.}

Let $P_1$ be any maximal subgroup of $P$ such that $N\nleq P_1$.
Then $P=P_{1}N$, $(P_1)_G=1$ and $P_1>1$ by (3). Suppose that $P_1$ is weakly ${(\mathfrak{U}_p)}_{s}$-quasinormal in $G$. Then $G$ has an $S$-quasinormal subgroup $T$ such that $P_{1}T$ is $S$-quasinormal in $G$ and $P_{1}\cap T\leq Z_{\mathfrak{U}_p}(G)$.
If $Z_{\mathfrak{U}_p}(G)>1$, then $N\leq Z_{\mathfrak{U}_p}(G)$ by (3), and so $|N|=p$, which is impossible. Thus $Z_{\mathfrak{U}_p}(G)=1$. Then $P_{1}\cap T=1$, and we can conclude that $|T|_{p}\leq p$. If $T=1$, then $P_{1}$ is $S$-quasinormal in $G$. By (3) and Lemma \ref{2.2}(6), $N\leq (P_{1})^{G}=(P_{1})^P=P_{1}$. This contradiction shows that $T>1$. By Lemma \ref{2.5}(1), $T\in \frak{N}_p$. Let $T_{p'}$ be the normal $p$-complement of $T$. Then $T_{p'}$ is subnormal in $G$ by Lemma \ref{2.2}(1), and so $T_{p'}\leq O_{p'}(G)=1$ by (1). This implies that $T$ is a group of order $p$. Then $P_{1}T$ is a Sylow $p$-subgroup of $G$. By (3) and Lemma \ref{2.2}(1), $P=P_{1}T=O_{p}(G)=N$. Consequently, $N\leq T^{G}=T^{P}=T$ by (3) and Lemma \ref{2.2}(6), and so $|N|=p$, which contradicts (3). Therefore, $P_1$ has a $p$-nilpotent supplement in $G$. Since $G=N\rtimes M$ and $M\in \frak{N}_p$ by (3), every maximal subgroup of $P$ has a $p$-nilpotent supplement in $G$. Note that $G$ is a $C_{p'}$-group because $G$ is $p$-soluble. Then by Lemma \ref{2.4}, $G\in \frak{N}_p$. The final contradiction ends the proof.
\end{proof}

\begin{Theorem}\label{C1} Let $p$ be a prime divisor of $|G|$ with $(|G|, p-1)=1$ and $E$ a normal subgroup of $G$ such that $G/E\in \mathfrak{N}_p$. If $E$ has a Sylow $p$-subgroup $P$ such that every maximal subgroup of $P$ either is weakly $(\mathfrak{U}_p)_{s}$-quasinormal or has a $p$-nilpotent supplement in $G$, then $G\in \mathfrak{N}_p$.
\end{Theorem}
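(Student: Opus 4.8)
The plan is to argue by induction on $|G|$, taking $G$ to be a counterexample of minimal order, and to bootstrap everything from Lemma \ref{1M}, which is exactly the case $E=G$. First I would eliminate $O_{p'}(G)$: the pair $(G/O_{p'}(G),\,EO_{p'}(G)/O_{p'}(G))$ still satisfies the hypotheses, since a maximal subgroup of $P$ is a $p$-group coprime to $O_{p'}(G)$ so Lemma \ref{2.3}(1) transports the weakly $(\mathfrak{U}_p)_s$-quasinormal ones, while images of $p$-nilpotent supplements are again $p$-nilpotent supplements; minimality then forces $O_{p'}(G)=1$. The decisive reduction is to apply Lemma \ref{1M} to $E$ itself: because $\mathfrak{U}_p$ and $\mathfrak{N}_p$ are $S$-closed, Lemma \ref{2.3}(3) makes each relevant maximal subgroup weakly quasinormal in $E$, and Dedekind's law turns a $p$-nilpotent supplement $K$ in $G$ into the $p$-nilpotent supplement $E\cap K$ in $E$, so $E\in\mathfrak{N}_p$. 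Its normal $p$-complement is characteristic in $E$, hence normal in $G$ and contained in $O_{p'}(G)=1$, whence $E$ is a $p$-group and $P=E\leq O_p(G)$. A useful by-product: as $E$ is a $p$-group and $G/E\in\mathfrak{N}_p$ is $p$-soluble, $G$ is $p$-soluble and therefore a $C_{p'}$-group.

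Next I would locate the offending chief factor. For a minimal normal subgroup $N\leq E$ the quotient $G/N$ again satisfies the hypotheses (Lemma \ref{2.3}(2) handles the quasinormal maximal subgroups that contain $N$, images handle the supplements), so $G/N\in\mathfrak{N}_p$ by minimality; the subdirect-product closure of the formation then forces $N$ to be the unique minimal normal subgroup of $G$ inside $E$, and in fact $N=G^{\mathfrak{N}_p}$. Two easy exits are removed: if $|N|=p$ then Lemma \ref{2.5}(2) gives $G\in\mathfrak{N}_p$, and if $N\leq\Phi(G)$ then saturation of $\mathfrak{N}_p$ does so. Hence $|N|>p$ and $N\not\leq\Phi(G)$; via $\Phi(E)\leq\Phi(G)$ (Gasch\"utz) I may choose a maximal subgroup $P_1$ of $E$ with $N\not\leq P_1$, and then $(P_1)_G=1$ by the uniqueness of $N$. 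Since $N\not\leq\Phi(G)$ there is moreover a complement $G=N\rtimes M$ with $M\cong G/N\in\mathfrak{N}_p$.

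I expect the genuine obstacle to be the quasinormal alternative for $P_1$, because here $E$ is \emph{not} a Sylow subgroup of $G$, so the clean counting bound $|T|_p\leq p$ used in Lemma \ref{1M} is unavailable. Given an $S$-quasinormal $T$ with $P_1T$ $S$-quasinormal and $P_1\cap T\leq Z_{\mathfrak{U}_p}(G)$, I would first intersect with $E$: as $Z_{\mathfrak{U}_p}(G)\cap E\neq1$ would put $N$ into $Z_{\mathfrak{U}_p}(G)$ and force $|N|=p$, I get $P_1\cap T=1$. To replace the missing bound I would show directly that $T$ is a $p$-group, combining $TE/E\cong T/(T\cap E)\in\mathfrak{N}_p$ with $O_{p'}(G)=1$ (a characteristic $p'$-section of $T$ is subnormal in $G$, hence trivial). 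Then $P_1T$ is a $p$-group, so by Lemma \ref{2.2}(6) $O^p(G)$ normalizes $(P_1T)\cap E=P_1(T\cap E)$; a one-dimensional-action argument trivializing the action of $O^p(G)$ on the order-$p$ sections $N/(N\cap P_1^{x})$ (respectively on $T\cap E$) via $(|G|,p-1)=1$ then forces $O^p(G)$ to centralize $N$, so $G/C_G(N)$ is a $p$-group and $|N|=p$ — a contradiction. Thus no $P_1$ with $N\not\leq P_1$ is weakly quasinormal, so each such $P_1$ has a $p$-nilpotent supplement, while every maximal subgroup of $E$ containing $N$ is supplemented by $M$. With all maximal subgroups of the $p$-subgroup $E$ supplemented and $G$ a $C_{p'}$-group, Lemma \ref{2.4} (with $\pi=p'$) yields that $G$ is $p'$-closed, i.e. $G\in\mathfrak{N}_p$, the final contradiction.
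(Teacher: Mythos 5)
You take a genuinely different route from the paper, and it is worth saying up front what the paper does: it never re-runs a minimal-counterexample analysis at the level of $G$. It applies Lemmas \ref{2.3}(3) and \ref{1M} to $E$ to get $E\in\mathfrak{N}_p$, inducts on $|G|$ to reduce to the case $E=P$ a $p$-group, and then applies Lemma \ref{1M} a second time to the subgroup $V$ (the preimage of the normal $p$-complement of $G/P$), in which $P$ \emph{is} a Sylow $p$-subgroup; Schur--Zassenhaus gives $V=P\rtimes V_{p'}$, Lemma \ref{1M} forces $V=P\times V_{p'}$, and $V_{p'}$ is then a normal $p$-complement of $G$. This sidesteps exactly the obstacle you identified (that your $P_1$ is not maximal in a Sylow subgroup of $G$). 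Your head-on approach confronts that obstacle, and it breaks precisely there.

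The gap is your claim that $T$ is a $p$-group, justified by ``combining $T/(T\cap E)\in\mathfrak{N}_p$ with $O_{p'}(G)=1$ (a characteristic $p'$-section of $T$ is subnormal in $G$, hence trivial).'' This inference is unsound: an extension of a $p$-group by a $p$-nilpotent group need not be $p$-nilpotent, so $T$ need not have any characteristic $p'$-subgroup to kill. Concretely, $T=A_4$ with $T\cap E=V_4$, $p=2$ satisfies everything you invoke ($T/(T\cap E)\cong C_3\in\mathfrak{N}_2$, subnormality, $O_{2'}=1$, $(|G|,p-1)=1$), yet $T$ is not a $2$-group. The tool the paper uses at the corresponding point of Lemma \ref{1N}(5) is Lemma \ref{2.2}(5) ($T/T_G$ nilpotent), but that route needs $T_G=1$, which there follows from $N$ being the unique minimal normal subgroup of all of $G$; in your setting $N$ is only unique \emph{inside} $E$, so from $T_G>1$ you can only deduce $T_G\cap E=1$ and that $T_G$ is a $p$-group, after which the $p'$-Hall of $T/T_G$ lifts to a subnormal subgroup that subnormality alone does not place in $O_{p'}(G)$. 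Since you need $P_1T$ to be a $p$-group in order to invoke Lemma \ref{2.2}(6), this step is load-bearing and the chain is broken. The repair is to avoid $T$ altogether: as $E\unlhd G$, Lemma \ref{2.2}(7) makes $T\cap E$ and $(P_1T)\cap E=P_1(T\cap E)$ $S$-quasinormal $p$-subgroups of $G$, so Lemma \ref{2.2}(6) applies to them directly; your coprime-action idea then does work in both cases: if $T\cap E=1$, then $O^p(G)$ normalizes $P_1$, acts trivially on $E/P_1$, and $[E,O^p(G)]\leq (P_1)_G=1$; if $|T\cap E|=p$, then $O^p(G)$ centralizes $(T\cap E)^G\geq N$; either way $O^p(G)\leq C_G(N)$ forces $|N|=p$, a contradiction. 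Finally, note that your concluding appeal to Lemma \ref{2.4} is made with $P=E$ not Sylow in $G$; this matches the letter of the lemma as stated, but the paper only ever applies it to Sylow subgroups (and for arbitrary $p$-subgroups the literal statement degenerates, e.g.\ at $P=1$), so that use also deserves justification or avoidance.
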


\begin{proof} By Lemmas \ref{2.3}(3) and \ref{1M}, $E\in \mathfrak{N}_p$. Let $E_{p'}$ be the normal $p$-complement of $E$. Then $E_{p'}\unlhd G$. Suppose that $E_{p'}>1$. Then by Lemma \ref{2.3}(1), we see that $G/E_{p'}$ satisfies the hypothesis of the theorem. Hence $G/E_{p'}\in \mathfrak{N}_p$ by induction on $|G|$, and so $G\in \mathfrak{N}_p$. We may, therefore, assume that $E_{p'}=1$. Then $E=P$ is a $p$-group. Let $V/P$ be the normal $p$-complement of $G/P$. By Schur-Zassenhaus Theorem, there exists a Hall ${p'}$-subgroup $V_{p'}$ of $V$ such that $V=P\rtimes V_{p'}$. Since $V\in \mathfrak{N}_p$ by Lemmas \ref{2.3}(3) and \ref{1M}, $V=P\times V_{p'}$. This induces that $V_{p'}$ is the normal $p$-complement of $G$. Consequently, $G\in \mathfrak{N}_p$.
\end{proof}

\begin{Lemma}\label{1N}  Let $P$ be a Sylow $p$-subgroup of $G$, where $p$ is a prime divisor of $|G|$. If $N_{G}(P)\in \mathfrak{N}_p$ and every maximal subgroup of $P$ either is weakly $(\mathfrak{U}_p)_{s}$-quasinormal or has a $p$-nilpotent supplement in $G$, then $G\in \mathfrak{N}_p$. \end{Lemma}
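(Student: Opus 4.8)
The plan is to argue by a minimal counterexample, so assume the lemma fails and let $G$ be a group of least order for which it fails. First I would show $O_{p'}(G)=1$: if $O_{p'}(G)>1$, then $\bar P=PO_{p'}(G)/O_{p'}(G)$ is a Sylow $p$-subgroup of $\bar G=G/O_{p'}(G)$, and a Frattini argument inside $N_G(PO_{p'}(G))$ gives $N_G(PO_{p'}(G))=O_{p'}(G)N_G(P)$, so $N_{\bar G}(\bar P)=N_G(P)O_{p'}(G)/O_{p'}(G)$ is an epimorphic image of $N_G(P)\in\mathfrak{N}_p$ and hence lies in $\mathfrak{N}_p$; the maximal-subgroup hypothesis descends by Lemma \ref{2.3}(1), so minimality forces $\bar G\in\mathfrak{N}_p$ and thus $G\in\mathfrak{N}_p$, a contradiction. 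I would also note at once that, writing the $p$-nilpotent group $N_G(P)$ as $P\times H$ with $H$ its normal $p$-complement, the subgroup $P$ must be non-abelian: otherwise $N_G(P)=C_G(P)$ and Burnside's normal $p$-complement theorem gives $G\in\mathfrak{N}_p$. In particular every maximal subgroup of $P$ is nontrivial.

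The heart of the proof, and the step I expect to be the main obstacle, is to prove that $G$ is $p$-soluble, equivalently (given $O_{p'}(G)=1$) that $O_p(G)\neq 1$. Suppose not, so $F(G)=1$ and the socle of $G$ is a direct product of non-abelian simple groups of order divisible by $p$. Here $Z_{\mathfrak{U}_p}(G)=1$, since a minimal normal subgroup contained in it would be $\mathfrak{U}_p$-central and, as $O_{p'}(G)=1$, neither a $p'$-group nor a non-abelian factor, hence an elementary abelian $p$-group, contradicting $F(G)=1$. If some maximal subgroup $P_1$ of $P$ is weakly $(\mathfrak{U}_p)_s$-quasinormal, then $(P_1)_G\le O_p(G)=1$ and $P_1\cap T\le Z_{\mathfrak{U}_p}(G)=1$ for the witnessing $T$, which is $S$-quasinormal, hence subnormal (Lemma \ref{2.2}(1)), with $|T|_p\le p$ and $T/T_G$ nilpotent (Lemma \ref{2.2}(5)); the combination of subnormality, the bound on $|T|_p$, and $F(G)=1$ must be pushed to contradict $P_1\neq 1$. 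If instead every maximal subgroup of $P$ has a $p$-nilpotent supplement $K$, then the normal $p$-complement of $K$ is a Hall $p'$-subgroup of $G$, and this existence together with $N_G(P)\in\mathfrak{N}_p$ has to be shown incompatible with $F(G)=1$. This is precisely the point at which the hypothesis $N_G(P)\in\mathfrak{N}_p$ must do the work that $(|G|,p-1)=1$ did in Lemma \ref{1M}, and I expect it to be the delicate part of the argument.

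Granting $p$-solubility, the endgame runs parallel to Lemma \ref{1M}, with $N_G(P)\in\mathfrak{N}_p$ replacing the two appeals to Lemma \ref{2.5}. I would take a minimal normal subgroup $N\le O_p(G)$, which is elementary abelian; the hypotheses pass to $G/N$ because $N_{G/N}(P/N)=N_G(P)/N$ is an epimorphic image of $N_G(P)$ and the maximal-subgroup condition transfers through Lemma \ref{2.3}(2) (lifting a $p$-nilpotent supplement when needed), since $N\le P$. By minimality $G/N\in\mathfrak{N}_p$, and as every minimal normal subgroup of the $p$-soluble group $G$ is an elementary abelian $p$-group, the formation property of $\mathfrak{N}_p$ forces $N$ to be the unique minimal normal subgroup, $N=O_p(G)$, $\Phi(G)=1$ and $G=N\rtimes M$ with $M\in\mathfrak{N}_p$. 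Since $C_G(O_p(G))\le O_p(G)$ in a $p$-soluble group with $O_{p'}(G)=1$, if $|N|=p$ then $C_P(N)=N$ forces $P=N\unlhd G$, whence $N_G(P)=G\in\mathfrak{N}_p$, a contradiction; so $|N|>p$. Finally, for a maximal subgroup $P_1$ of $P$ with $N\nleq P_1$ we have $P=P_1N$ and $(P_1)_G=1$; if $P_1$ were weakly $(\mathfrak{U}_p)_s$-quasinormal, then $N\nleq Z_{\mathfrak{U}_p}(G)$ (a $\mathfrak{U}_p$-central $p$-chief factor is cyclic, which would give $|N|=p$), so $Z_{\mathfrak{U}_p}(G)=1$, $P_1\cap T=1$, $T_G=1$, and then $T$ is nilpotent and subnormal, giving $T\le F(G)=N$ and $|T|=p$; hence $P_1T=P$ is an $S$-quasinormal, so subnormal, Sylow $p$-subgroup, $P\unlhd G$, and again $N_G(P)=G\in\mathfrak{N}_p$. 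Thus every maximal subgroup of $P$ has a $p$-nilpotent supplement, and since the $p$-soluble group $G$ is a $C_{p'}$-group, Lemma \ref{2.4} yields $G\in\mathfrak{N}_p$, the final contradiction.
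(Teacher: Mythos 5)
Your overall strategy (minimal counterexample, kill $O_{p'}(G)$, prove $p$-solubility, then run the endgame of Lemma \ref{1M} with $N_G(P)\in\mathfrak{N}_p$ supplying the final contradictions) is the same as the paper's, and your third paragraph is essentially correct: your shortcut ``$P_1T=P$ is $S$-quasinormal, hence subnormal, hence normal, so $G=N_G(P)\in\mathfrak{N}_p$'' is a clean alternative to the paper's computation $N\le T^G=T^P=T$, though you should still dispose of the case $T=1$ separately (there $P_1$ itself is $S$-quasinormal and Lemma \ref{2.2}(6) gives $N\le (P_1)^{G}=(P_1)^{P}=P_1$, a contradiction, since $|T|=p$ cannot be asserted when $T$ is trivial). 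But there is a genuine gap exactly where you flagged it: you never prove that $G$ is $p$-soluble. You only describe what ``must be pushed'' and what ``has to be shown incompatible'' with $F(G)=1$; attempting to contradict $F(G)=1$ directly from subnormality of $T$, the bound $|T|_p\le p$, or the existence of Hall $p'$-subgroups does not obviously lead anywhere, which is precisely why the paper invokes a deep theorem at this point rather than an elementary argument.

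The two missing ideas are these. First, dispose of $p=2$ at the outset: when $p=2$ the condition $(|G|,p-1)=1$ is vacuous, so Lemma \ref{1M} applies verbatim and there is nothing to prove; the remainder of the argument may then assume $p>2$. Second, for odd $p$ the paper uses the Glauberman--Thompson normal $p$-complement theorem \cite[Chap. 8, Theorem 3.1]{DG}: since $G\notin\mathfrak{N}_p$, there exists a non-trivial \emph{characteristic} subgroup $L$ of $P$ with $N_G(L)\notin\mathfrak{N}_p$. To exploit this, one first records an intermediate step absent from your proposal: every proper subgroup $H$ with $P\le H<G$ inherits the hypotheses (by Lemma \ref{2.3}(3), together with $N_H(P)=N_G(P)\cap H\in\mathfrak{N}_p$) and hence lies in $\mathfrak{N}_p$ by minimality. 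Consequently, if $L$ were not normal in $G$, then $P\le N_G(L)<G$ would force $N_G(L)\in\mathfrak{N}_p$, a contradiction; so $L\unlhd G$ and $O_p(G)>1$. Then $N_{G/O_p(G)}(P/O_p(G))=N_G(P)/O_p(G)\in\mathfrak{N}_p$, Lemma \ref{2.3}(2) passes the hypotheses to $G/O_p(G)$, and minimality gives $G/O_p(G)\in\mathfrak{N}_p$, whence $G$ is $p$-soluble. Without this case split at $p=2$ and this appeal to Glauberman--Thompson for $p>2$, the proof cannot be completed along the lines you propose.
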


\begin{proof} If $p=2$, then obviously, $G\in \mathfrak{N}_2$ by Lemma \ref{1M}. So we only need to prove the lemma in the case that $p>2$. Suppose that the result is false and let $G$ be a counterexample of minimal order. Then:

(1) \textit{$O_{p'}(G)=1.$}

Suppose that $O_{p'}(G)>1$. Since $N_{G/O_{p'}(G)}(PO_{p'}(G)/O_{p'}(G))=N_{G}(P)\\O_{p'}(G)/O_{p'}(G)\in \mathfrak{N}_p$, $G/O_{p'}(G)$ satisfies the hypothesis of the lemma by Lemma \ref{2.3}(1). The choice of $G$ implies that $G/O_{p'}(G)\in \mathfrak{N}_p$, and thereby $G\in \mathfrak{N}_p$, a contradiction.

(2) \textit{If $P\leq H<G$, then $H\in \mathfrak{N}_p$.}

By Lemma \ref{2.3}(3), $H$ satisfies the hypothesis of the lemma, and so $H\in \mathfrak{N}_p$ by the choice of $G$.

(3) \textit{$G$ is $p$-soluble.}

Since $G\notin \mathfrak{N}_p$, then there exists a non-trivial characteristic subgroup $L$ of $P$ such that $N_{G}(L)\notin \mathfrak{N}_p$ by \cite[Chap. 8, Theorem 3.1]{DG}. If $L\ntrianglelefteq G$, then $P\leq N_G(L)<G$, and so $N_G(L)\in \mathfrak{N}_p$ by (2), which is impossible. Thus $L\unlhd G$. This implies that $O_{p}(G)>1$. Since $N_{G/O_{p}(G)}(P/O_{p}(G))=N_{G}(P)/O_{p}(G)\in \mathfrak{N}_p$, $G/O_{p}(G)$ satisfies the hypothesis of the lemma by Lemma \ref{2.3}(2). The choice of $G$ induces that $G/O_{p}(G)\in \mathfrak{N}_p$, and thereby $G$ is $p$-soluble.

(4) \textit{$G$ has a unique minimal normal subgroup $N$, $G/N\in \mathfrak{N}_p$ and $G=N\rtimes M$, where $M$ is a maximal subgroup of $G$. Moreover, $N=O_{p}(G)$ and $|N|>p$.}

Let $N$ be a minimal normal subgroup of $G$. Then by (1) and (3), $N\leq O_p(G)$. Since $N_{G/N}(P/N)=N_{G}(P)/N\in \mathfrak{N}_p$, the hypothesis of the lemma holds for $G/N$ by Lemma \ref{2.3}(2), and so $G/N\in \mathfrak{N}_p$ by the choice of $G$. It is easy to see that $N=O_p(G)$ is the unique minimal normal subgroup of $G$ and $G$ has a maximal subgroup $M$ such that $G=N\rtimes M$. If $|N|=p$, then by Lemma \ref{2.7}, $G\in \mathfrak{U}_p$. As $O_{p'}(G)=1$, $P\unlhd G$ by \cite[Lemma 2.1.6]{Bal}, and thus $G=N_G(P)\in \mathfrak{N}_p$, a contradiction. Hence $|N|>p$.

(5) \textit{Final contradiction.}

Let $P_1$ be any maximal subgroup of $P$ such that $N\nleq P_1$.
Then by (4), we have that $P=P_{1}N$, $(P_1)_G=1$ and $P_1>1$. Assume that $P_1$ is weakly ${(\mathfrak{U}_p)}_{s}$-quasinormal in $G$. Then $G$ has an $S$-quasinormal subgroup $T$ such that $P_{1}T$ is $S$-quasinormal in $G$ and $P_{1}\cap T\leq Z_{\mathfrak{U}_p}(G)$.
It follows from (4) that $Z_{\mathfrak{U}_p}(G)=1$. Otherwise $|N|=p$, a contradiction. Then $P_{1}\cap T=1$, and so $|T|_{p}\leq p$. If $T=1$, then $P_{1}$ is $S$-quasinormal in $G$. By (4) and Lemma \ref{2.2}(6), $N\leq (P_{1})^{G}=(P_{1})^P=P_{1}$, which is impossible. Thus $T>1$.
If $T_G>1$, then $N\leq T$ by (4), and so $|N|=p$, a contradiction. Hence $T_G=1$. By Lemma \ref{2.2}(5), $T$ is nilpotent. Since $T$ is subnormal in $G$ by Lemma \ref{2.2}(1), $T$ is a group of order $p$, because $O_{p'}(G)=1$ by (1). Then $P_{1}T$ is a Sylow $p$-subgroup of $G$. By (4) and Lemma \ref{2.2}(1), $P=P_{1}T=O_{p}(G)=N$. Thus $N\leq T^{G}=T^{P}=T$ by (4) and Lemma \ref{2.2}(6), and so $|N|=p$, which contradicts (4). Therefore, $P_1$ has a $p$-nilpotent supplement in $G$. Since $G=N\rtimes M$ and $M\in \mathfrak{N}_p$ by (4), every maximal subgroup of $P$ has a $p$-nilpotent supplement in $G$. Then by (3) and Lemma \ref{2.4}, $G\in \mathfrak{N}_p$. This is the final contradiction.
\end{proof}

\begin{Theorem}\label{C4}
Let $p$ be a prime divisor of $|G|$ and $E$ a normal subgroup of $G$ such that $G/E\in \mathfrak{N}_p$. If $E$ has a Sylow $p$-subgroup $P$ such that $N_G(P)\in \mathfrak{N}_p$ and every maximal subgroup of $P$ either is weakly $(\mathfrak{U}_p)_{s}$-quasinormal or has a $p$-nilpotent supplement in $G$, then $G\in \mathfrak{N}_p$. \end{Theorem}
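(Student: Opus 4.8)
The plan is to follow the template of the proof of Theorem \ref{C1}: first push the hypotheses down to the normal subgroup $E$ to conclude $E\in\mathfrak{N}_p$, and then strip off the $p'$-part of $E$. The crucial difference is the last step. In Theorem \ref{C1} the arithmetic hypothesis $(|G|,p-1)=1$ is inherited by $G/E_{p'}$, so one simply inducts; here the hypothesis $N_G(P)\in\mathfrak{N}_p$ degenerates in the quotient $G/E_{p'}$, because the image of $P$ becomes the full normal $p$-part and its normalizer is the whole quotient. I would therefore avoid induction at that stage and instead build a normal $p$-complement directly via a Frattini argument.

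First I would verify that the hypotheses of Lemma \ref{1N} hold for $E$. By Lemma \ref{2.3}(3) (and the fact that $\mathfrak{U}_p$ is $S$-closed), every maximal subgroup of $P$ that is weakly $(\mathfrak{U}_p)_s$-quasinormal in $G$ is weakly $(\mathfrak{U}_p)_s$-quasinormal in $E$. If instead such a maximal subgroup $P_1$ has a $p$-nilpotent supplement $K$ in $G$, then, since $P_1\leq P\leq E$, the modular law gives $E=E\cap P_1K=P_1(K\cap E)$, where $K\cap E\in\mathfrak{N}_p$ because $\mathfrak{N}_p$ is $S$-closed; thus $P_1$ has a $p$-nilpotent supplement in $E$. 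Finally $N_E(P)=N_G(P)\cap E\leq N_G(P)\in\mathfrak{N}_p$ is $p$-nilpotent. Hence Lemma \ref{1N} applies and $E\in\mathfrak{N}_p$.

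Now let $E_{p'}$ be the normal $p$-complement of $E$. Then $E_{p'}$ is characteristic in $E$, so $E_{p'}\unlhd G$, and $E=E_{p'}P$. Because $P$ is a Sylow $p$-subgroup of the normal subgroup $E$, the Frattini argument yields $G=EN_G(P)=E_{p'}P\,N_G(P)=E_{p'}N_G(P)$. Consequently
\[
G/E_{p'}=E_{p'}N_G(P)/E_{p'}\cong N_G(P)/\bigl(N_G(P)\cap E_{p'}\bigr),
\]
which is a homomorphic image of the $p$-nilpotent group $N_G(P)$; since $\mathfrak{N}_p$ is a formation, $G/E_{p'}\in\mathfrak{N}_p$. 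As $E_{p'}$ is a normal $p'$-subgroup and $G/E_{p'}\in\mathfrak{N}_p$, the preimage in $G$ of the normal $p$-complement of $G/E_{p'}$ is a normal $p'$-subgroup $W$ with $G/W$ a $p$-group, so $W$ is a normal $p$-complement of $G$ and $G\in\mathfrak{N}_p$.

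I expect the only real obstacle to be spotting that the reduction used in Theorem \ref{C1} breaks down, since the normalizer condition cannot be recovered in $G/E_{p'}$. Once the Frattini identity $G=E_{p'}N_G(P)$ is in hand, the fact that $G/E_{p'}$ is an epimorphic image of $N_G(P)$ makes everything routine. The degenerate case $P=1$ needs no separate treatment: there $N_G(P)=G\in\mathfrak{N}_p$ is exactly the hypothesis, and the general argument covers it as well.
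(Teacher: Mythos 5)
Your proof is correct, and it genuinely diverges from the paper's at the final step. The reduction of the hypotheses to $E$ (via Lemma \ref{2.3}(3), the Dedekind identity $E=P_1(K\cap E)$ for supplements, and $S$-closure of $\mathfrak{N}_p$ for $N_E(P)$) is exactly what the paper does, only spelled out more fully. But after obtaining $E\in\mathfrak{N}_p$, the paper keeps the inductive template of Theorem \ref{C1}: when $E_{p'}>1$ it passes to $G/E_{p'}$, invokes induction on $|G|$, and settles the terminal case $E_{p'}=1$ by observing that then $E=P\unlhd G$, so $G=N_G(P)\in\mathfrak{N}_p$ by hypothesis --- which is just the degenerate form of your Frattini argument. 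Your replacement --- $G=EN_G(P)=E_{p'}N_G(P)$ by Frattini, so $G/E_{p'}$ is an epimorphic image of $N_G(P)$ and hence lies in $\mathfrak{N}_p$, after which the normal $p$-complement pulls back over the normal $p'$-group $E_{p'}$ --- avoids induction altogether and is shorter. One caveat: your stated reason for abandoning the induction is overstated. The normalizer hypothesis does survive in $G/E_{p'}$: since $E_{p'}$ is a normal $p'$-subgroup, $N_{G/E_{p'}}(PE_{p'}/E_{p'})=N_G(P)E_{p'}/E_{p'}$ (the same standard identity the paper uses in step (1) of Lemma \ref{1N}), and this is an image of $N_G(P)$, hence $p$-nilpotent; that this normalizer happens to be all of $G/E_{p'}$ does not invalidate the induction --- it merely shows that verifying the inductive hypothesis already proves what the induction would conclude. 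So the paper's induction is valid but redundant once this identity is invoked (and its citation of Lemma \ref{2.3}(1) alone indeed does not cover the normalizer condition, which is presumably the gap you sensed); your direct route has the merit of making the actual content --- Frattini plus quotient-closure of $\mathfrak{N}_p$ --- explicit.
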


\begin{proof} By Lemmas \ref{2.3}(3) and \ref{1N}, $E\in \mathfrak{N}_p$. Let $E_{p'}$ be the normal $p$-complement of $E$. Clearly, $E_{p'}\unlhd G$. Suppose that $E_{p'}>1$. Then by Lemma \ref{2.3}(1), $G/E_{p'}$ satisfies the hypothesis of the theorem. By induction on $|G|$, we have that $G/E_{p'}\in \mathfrak{N}_p$, and so $G\in \mathfrak{N}_p$. Hence we may assume that $E_{p'}=1$. Then $E=P$ is a $p$-group. Therefore, $G=N_G(P)\in \mathfrak{N}_p$.
\end{proof}

\begin{Theorem}\label{1s}  Suppose that for every prime divisor $p$ of $|G|$ and every non-cyclic Sylow $p$-subgroup $P$ of $G$, every maximal subgroup of $P$ either is weakly $(\mathfrak{U}_p)_{s}$-quasinormal or has a $p$-supersoluble supplement in $G$. Then $G\in\mathfrak{U}$.\end{Theorem}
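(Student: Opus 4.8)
The plan is to argue by minimal counterexample: let $G$ have least order among groups satisfying the hypotheses with $G\notin\mathfrak{U}$, and first secure solubility. Let $p$ be the smallest prime dividing $|G|$, so $(|G|,p-1)=1$. If a Sylow $p$-subgroup $P$ is cyclic then $G\in\mathfrak{N}_p$ by Lemma \ref{2.5}(1); otherwise the hypothesis applies to the maximal subgroups of $P$, and I would use the elementary fact that a $p$-supersoluble group whose smallest prime divisor is $p$ is already $p$-nilpotent (its $p$-chief factors are $\mathfrak{N}_p$-central since $(|G|,p-1)=1$). Hence every $p$-supersoluble supplement is here a $p$-nilpotent supplement, and Theorem \ref{C1} with $E=G$ gives $G\in\mathfrak{N}_p$. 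Thus $G=K\rtimes P$ with $K=O_{p'}(G)$; if $K=1$ then $G$ is a $p$-group, hence supersoluble, so $K>1$. By Lemma \ref{2.3}(3) (and the Dedekind identity applied to the supplement hypothesis) $K$ inherits the hypotheses, so $K\in\mathfrak{U}$ by minimality and $G=K\rtimes P$ is soluble.

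Next I would carry out the usual reduction for a minimal non-supersoluble soluble group. For any minimal normal subgroup $N$, Lemma \ref{2.3}(2) shows $G/N$ satisfies the hypotheses (the maximal subgroups of $\overline{P}=PN/N$ correspond to maximal subgroups of $P$ containing $N$, and supplements descend because $\mathfrak{U}_p$ is a formation), whence $G/N\in\mathfrak{U}$ by minimality. Since $\mathfrak{U}$ is a saturated formation closed under subdirect products, I would deduce $\Phi(G)=1$, that $N$ is the unique minimal normal subgroup, and that $N=C_G(N)=F(G)=O_p(G)=G^{\mathfrak{U}}$ is an elementary abelian $p$-group complemented by a maximal subgroup $M$. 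As $G\in\mathfrak{U}$ would follow from $N$ cyclic by Lemma \ref{2.7}, the whole problem reduces to proving $|N|=p$; in particular I may assume $P$ is non-cyclic, so the hypothesis is available for its maximal subgroups.

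Now fix a maximal subgroup $N_1$ of $N$ and put $P_1=N_1(P\cap M)$, a maximal subgroup of $P$ with $N\cap P_1=N_1$, $P=P_1N$, and $(P_1)_G=1$ (its core is a normal $p$-subgroup inside the minimal $N$, hence trivial). If $P_1$ is weakly $(\mathfrak{U}_p)_s$-quasinormal, I would mimic step $(4)$ of Lemma \ref{1M} and step $(5)$ of Lemma \ref{1N}: from $(P_1\cap T)(P_1)_G/(P_1)_G\le Z_{\mathfrak{U}_p}(G/(P_1)_G)$ and $(P_1)_G=1$ one gets $P_1\cap T\le Z_{\mathfrak{U}_p}(G)$. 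If $Z_{\mathfrak{U}_p}(G)>1$ it contains the unique $N$, forcing the $p$-chief factor $N$ to be $\mathfrak{U}_p$-central and so $|N|=p$; if $Z_{\mathfrak{U}_p}(G)=1$ then $P_1\cap T=1$, whence $|T|_p\le p$, and using Lemma \ref{2.2}(1),(5),(6) exactly as before either $P_1\unlhd G$ or $P_1T=O_p(G)=N$ with $T$ of order $p$ normal in $G$, both contradicting $|N|>p$.

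The remaining, genuinely harder, case is when no such $P_1$ is weakly quasinormal, so every $P_1=N_1(P\cap M)$ has a $p$-supersoluble supplement $T$ in $G$. Here the machinery of Lemmas \ref{1M}--\ref{1N} breaks down, since Lemma \ref{2.4} controls only $p$-nilpotency, not supersolubility. My plan is to exploit $T\in\mathfrak{U}_p$ directly: one checks that $P=P_1(P\cap T)$, that $P\cap T$ is a Sylow $p$-subgroup of $T$, and that $D:=N\cap T=N\cap(P\cap T)$ is normalised by both $T$ and the abelian $N$, hence $D\unlhd NT$. Since $T$ is $p$-supersoluble and $D\le O_p(T)$, the $T$-chief factors inside $D$ are cyclic of order $p$. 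I would then try to promote $D$ to a proper non-trivial $G$-invariant subgroup of $N$, contradicting minimality of $N$ unless $D\in\{1,N\}$, either by showing $NT=G$ (so $D\unlhd G$ forces $D=1$ or $D=N$) or, when $NT<G$, by using that $|G:NT|$ is a $p$-power to locate a Hall $p'$-complement inside $NT$ and transfer the supersoluble action back onto $N$. The crux is converting the $p$-supersolubility of $T$ into cyclicity of the $G$-chief factor $N$; this is where I expect the main difficulty to lie, and it is the one step that most needs a new idea beyond the $p$-nilpotency arguments already in place.
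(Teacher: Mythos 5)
You do not close the proof: the case in which the relevant maximal subgroups have $p$-supersoluble supplements is precisely where you stop and say a new idea is needed, and this is a genuine gap, not a technicality. The paper avoids the difficulty by proving more in its first reduction than you do. You extract only solubility (via $p$-nilpotence at the smallest prime and induction on the normal complement), whereas the paper shows that $G$ is a \emph{Sylow tower group of supersoluble type}: apply Lemma \ref{1M} at the smallest prime (using, as you correctly observe, that a $p$-supersoluble supplement is automatically $p$-nilpotent when $(|G|,p-1)=1$) and iterate. Consequently, for the largest prime $r$ dividing $|G|$ the Sylow $r$-subgroup $R$ is normal in $G$, and in the minimal-non-supersoluble reduction the unique minimal normal subgroup $N$ satisfies $N=C_G(N)=F(G)=R$; that is, $N$ is a \emph{full} Sylow subgroup. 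This is exactly what trivializes the supplement case: if $R_1$ is a maximal subgroup of $R$ and $K$ is an $r$-supersoluble supplement of $R_1$ in $G$, then $G=R_1K\leq RK$, and $R\cap K$ is normalized by $K$ (since $R\unlhd G$) and centralized by the abelian $R$, so $R\cap K\unlhd G$. Minimality of $R$ then forces either $R\cap K=1$, whence $R=R_1(R\cap K)=R_1$, absurd, or $R\leq K$, whence $G=K\in\mathfrak{U}_r$ and $|R|=r$, contradicting $R$ non-cyclic. In your setup, because $N$ may be proper in $P$ and your $P_1$ is not contained in $N$, the subgroup $N\cap T$ has no reason to be $G$-invariant, which is the source of all the trouble in your last paragraph.

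Two further remarks. First, the ingredients for the fix are already in your own step 1: your normal complement $K\in\mathfrak{U}$ has a characteristic Sylow subgroup for the largest prime, which is therefore normal in $G$, hence contained in $F(G)=N$, forcing $N$ to be a full Sylow subgroup after all; you simply never combine these facts, and instead set up the final case in a generality where it becomes intractable. Second, your construction $P_1=N_1(P\cap M)$ is not legitimate: $P\cap M$ need not normalize an arbitrary maximal subgroup $N_1$ of $N$, so $N_1(P\cap M)$ need not be a subgroup at all. The correct move (as in Lemmas \ref{1M} and \ref{1N}) is to choose a maximal subgroup $P_1$ of $P$ with $N\nleq P_1$; such a $P_1$ exists because $N\leq\Phi(P)$ would imply $N\leq\Phi(G)=1$, and then $P=P_1N$, $P_1\cap N$ is maximal in $N$, and $(P_1)_G=1$ as you intended.
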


\begin{proof} Suppose that the theorem is false and let $G$ be a counterexample of minimal order. Then:

(1) \textit{$G$ is a Sylow tower group of supersoluble type.}

Let $q$ be the smallest prime dividing $|G|$ and $Q$ a Sylow $q$-subgroup of $G$. If $Q$ is cyclic, then $G\in \mathfrak{N}_q$ by Lemma \ref{2.5}(1). Now suppose that $Q$ is non-cyclic. Since $G$ satisfies the hypothesis of Lemma \ref{1M}, $G\in \mathfrak{N}_q$ too. Then by Lemma \ref{2.3}(1), we can deduce that $G$ is a Sylow tower group of supersoluble type by analogy.

(2) \textit{Let $r$ be the largest prime dividing $|G|$ and $R$ a Sylow $r$-subgroup of $G$. Then $R$ is the unique minimal normal subgroup of $G$, $G/R\in\mathfrak{U}$ and $G=R\rtimes M$, where $M$ is a maximal subgroup of $G$. Moreover, $R=F(G)$ and $R$ is non-cyclic.}

By (1), $G$ is soluble and $R\unlhd G$. Let $N$ be any minimal normal subgroup of $G$. Then $N$ is elementary abelian. By Lemmas \ref{2.3}(1) and \ref{2.3}(2), the hypothesis of the theorem holds for $G/N$, and so the choice of $G$ implies that $G/N\in\mathfrak{U}$. Clearly, $N$ is the unique minimal normal subgroup of $G$ and $\Phi(G)=1$. It follows that $N\leq R$ and $G$ has a maximal subgroup $M$ such that $G=N\rtimes M$. Since $C_G(N)\cap M=1$, $N=C_{G}(N)$, and thereby $N=F(G)$. This induces that $R=N$. If $R$ is cyclic, then by Lemma \ref{2.7}, $G\in\mathfrak{U}$, which is impossible. Thus $R$ is non-cyclic.

(3) \textit{Final contradiction.}

Let $R_1$ be any maximal subgroup of $R$.
Then by (2), $(R_1)_G=1$ and $R_1>1$. Suppose that $R_1$ is weakly ${(\mathfrak{U}_r)}_{s}$-quasinormal in $G$. Then we can derive a contradiction as in step (5) of the proof of Lemma \ref{1N}. Hence $R_1$ has a $r$-supersoluble supplement in $G$, say $K$. Since $R\cap K\unlhd G$, by (2), either $R\cap K=1$ or $R\leq K$. In the former case, $R_1\cap K=1$, and so $R=R_1$, a contradiction. In the latter case, $G=K\in \mathfrak{U}_r$. Then $|R|=r$, a contradiction too. The proof is thus completed.
\end{proof}

\begin{Lemma}\label{2M} Let $P$ be a Sylow $p$-subgroup of $G$, where $p$ is a prime divisor of $|G|$ with $(|G|,p-1)=1$. If every cyclic subgroup of $P$ of order $p$ or $4$ $($when $P$ is a non-abelian $2$-group$)$ either is weakly ${(\mathfrak{U}_p)}_{s}$-quasinormal or has a $p$-nilpotent supplement in $G$, then $G\in \mathfrak{N}_p$.
\end{Lemma}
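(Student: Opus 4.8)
The plan is to argue by contradiction with a counterexample $G$ of minimal order, following the four-step skeleton of Lemma~\ref{1M} but substituting an analysis of minimal subgroups for that of maximal subgroups. First I would show $O_{p'}(G)=1$: every cyclic subgroup of $P$ of order $p$ or $4$ is a $p$-group, so its order is prime to $|O_{p'}(G)|$, and Lemma~\ref{2.3}(1) carries the hypothesis down to $G/O_{p'}(G)$, whose membership in $\mathfrak{N}_p$ (by minimality) would pull back to $G$. Next I would prove $G$ is $p$-soluble exactly as in step~(2) of Lemma~\ref{1M}: if not, the Feit--Thompson theorem forces $p=2$, and then either every relevant cyclic subgroup has a $2$-nilpotent supplement, so that $G$ has a Hall $2'$-subgroup and Lemmas~\ref{2.6} and \ref{2.4} give $G\in\mathfrak{N}_2$, or some such subgroup is weakly $(\mathfrak{U}_2)_s$-quasinormal, which is eliminated using $Z_{\mathfrak{U}_2}(G)=1$ and Lemma~\ref{2.2}.

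I would then reduce, as in step~(3) of Lemma~\ref{1M}, to the configuration where $G$ has a unique minimal normal subgroup $N$ with $N=O_p(G)=F(G)=C_G(N)$ elementary abelian, $G=N\rtimes M$, $G/N\cong M\in\mathfrak{N}_p$ and $|N|>p$. The delicate point here, and the place where minimal subgroups behave worse than maximal ones, is the inheritance of the hypothesis by $G/N$: since $N$ is a $p$-group, a subgroup of order $p$ of $P/N$ need not lift to a cyclic subgroup of $P$ of order $p$ or $4$, and neither clause of Lemma~\ref{2.3} transfers the condition directly. This is exactly why the order-$4$ clause is imposed, so that the $\Omega_1$-analysis in the non-abelian $2$-group case still closes; I would verify the reduction with this in mind.

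The heart of the proof, and the main obstacle, is the final contradiction. Fix a minimal subgroup $L\le N$; then $|L|=p$ and $L_G=1$. If $L$ has a $p$-nilpotent supplement $K$, then $G=LK$ forces $|G:K|\le p$: the case $K=G$ gives $G\in\mathfrak{N}_p$, while $|G:K|=p$ makes $N\cap K$ normal in $NK=G$, so $N\cap K=1$ and $|N|=|G:K|=p$, against $|N|>p$. If instead $L$ is weakly $(\mathfrak{U}_p)_s$-quasinormal, take $T$ with $LT$ $S$-quasinormal and $L\cap T\le Z_{\mathfrak{U}_p}(G)$; since $Z_{\mathfrak{U}_p}(G)=1$ (otherwise $N\le Z_{\mathfrak{U}_p}(G)$ and $|N|=p$), we get $L\cap T=1$. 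Now comes the genuinely new difficulty: unlike the maximal-subgroup case of Lemma~\ref{1M}, the equality $L\cap T=1$ gives no bound on $|T|_p$. I would handle this representation-theoretically. Since $T_G=1$ (else $N\le T$ and then $L\le T$), Lemma~\ref{2.2}(5),(1) make $T$ nilpotent and subnormal, so $T\le F(G)=N$; by Lemma~\ref{2.2}(6) the $S$-quasinormal $p$-subgroups of $G$ lying in $N$ are precisely the $O^p(G)$-submodules of $N$. The socle of $N$ as an $\mathbb{F}_p[O^p(G)]$-module is $G$-invariant, hence equals $N$, so $N$ is a semisimple $O^p(G)$-module; splitting $T$ off inside the submodule $L+T$ (which contains $T$ with index $p$) yields a one-dimensional $O^p(G)$-submodule $W$. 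Because $(|G|,p-1)=1$, the map $O^p(G)\to\operatorname{Aut}(W)\cong\mathbb{F}_p^{\times}$ is trivial, so $W\le C_N(O^p(G))$; as $C_N(O^p(G))\unlhd G$ is nontrivial it equals $N$, whence $O^p(G)\le C_G(N)=N$ and $G$ is a $p$-group, contradicting $G\notin\mathfrak{N}_p$.

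The step I expect to be hardest is this last one: translating the weak quasinormality of the order-$p$ subgroups into module data over $O^p(G)$ and then exploiting $(|G|,p-1)=1$ through the semisimplicity of $N$ to manufacture a one-dimensional, hence central, submodule. The secondary obstacle is the quotient reduction of the second paragraph, where the minimal-subgroup hypothesis must be shown to survive passage to $G/N$ despite $N$ being a $p$-group.
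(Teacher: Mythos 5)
Your proposal has a genuine gap, and it sits exactly where you flag it: the inheritance of the hypothesis by quotients over $p$-groups. In your step (3) you need $G/N\in\mathfrak{N}_p$ for a minimal normal $p$-subgroup $N$, and in your step (2) you need the same for $G/O_2(G)$; but a cyclic subgroup of order $p$ in $P/N$ has the form $\langle xN\rangle$ with $x^p\in N$, so $\langle x\rangle$ may have order $p^2$ (for $p$ odd) and is then simply not covered by the hypothesis on $G$. Lemma~\ref{2.3}(1) requires $(|H|,|N|)=1$ and Lemma~\ref{2.3}(2) requires $N\leq H$, so neither clause applies, and the order-$4$ clause does not repair this -- it has nothing to do with quotients (its real purpose appears below). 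Saying you ``would verify the reduction with this in mind'' does not resolve it; without $G/N\in\mathfrak{N}_p$ your whole unique-minimal-normal-subgroup frame, and hence your final step, is unsupported. Your step (2) has two further defects: Lemma~\ref{2.4} demands that every \emph{maximal} subgroup of $P$ have a $\pi$-closed supplement, which you cannot extract from supplements of subgroups of order $2$ or $4$; and the elimination of the weakly quasinormal case ``using $Z_{\mathfrak{U}_2}(G)=1$ and Lemma~\ref{2.2}'' breaks down because $L\cap T=1$ with $|L|\leq 4$ gives no bound on $|T|_2$ -- in Lemma~\ref{1M} that bound came precisely from $P_1$ being maximal in $P$.

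The paper's proof avoids all of this by never passing to quotients: by Lemma~\ref{2.3}(3) (with $\mathfrak{U}_p$ being $S$-closed) the hypothesis passes to \emph{subgroups}, so every maximal subgroup of $G$ lies in $\mathfrak{N}_p$ and $G$ is a minimal non-$p$-nilpotent group. The It\^{o}--Schmidt theorem then gives $G=P\rtimes Q$ with $P/\Phi(P)$ a chief factor of $G$ and $\exp(P)\in\{p,4\}$; this exponent restriction is where the order-$4$ clause in the statement is actually used, since it guarantees that every $x\in P\setminus\Phi(P)$ generates a subgroup covered by the hypothesis, and the contradiction is then read off inside this very rigid structure. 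For what it is worth, your final module-theoretic step is correct and rather elegant \emph{granted} the frame of step (3): $T\leq F(G)=N$, semisimplicity of $N$ as an $\mathbb{F}_p[O^p(G)]$-module, splitting a one-dimensional summand off $LT$, and using $(|G|,p-1)=1$ to force it to be centralized all check out. But since the frame cannot be reached with the available lemmas, the proposal does not constitute a proof; to keep your ending you would first have to establish the reduction, which is in effect the content the paper extracts from the minimal non-$p$-nilpotent structure.
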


\begin{proof} Suppose that the result is false and let $G$ be a counterexample of minimal order.
Let $M$ be any maximal subgroup of $G$. By Lemma \ref{2.3}(3), it is easy to see that the hypothesis of the lemma still holds on $M$. Hence $M\in \mathfrak{N}_p$ by the choice of $G$, and so $G$ is a minimal non-$p$-nilpotent group. In view of \cite[Chap. IV, Satz 5.4]{HB} and \cite[Chap. VII, Theorem 6.18]{KT}, $G$ is a minimal
non-nilpotent group; $G=P\rtimes Q$, where $Q$ is a Sylow $q$-subgroup of $G$ with $q\neq p$;
$P/\Phi(P)$ is a chief factor of $G$; the exponent of $P$ is
$p$ or 4 (when $P$ is a
non-abelian $2$-group). If $P/\Phi(P)\leq Z_{\mathfrak{U}_p}(G/\Phi(P))$, then $G/\Phi(P)\in \mathfrak{U}_p$, and thereby $G\in \mathfrak{U}_p$. Since $(|G|,p-1)=1$, $G\in \mathfrak{N}_p$, which is impossible. Thus $P/\Phi(P)\nleq Z_{\mathfrak{U}_p}(G/\Phi(P))$, and so $|P/\Phi(P)|>p$.

Let $x\in P \backslash\Phi(P)$, $H=\langle x\rangle$ and $V=H\Phi(P)$. Then $|H|=p$ or $4$ (when $P$ is a
non-abelian $2$-group) and $H<P$. Since $P/\Phi(P)$ is a chief factor of $G$, $H_G\leq \Phi(P)$.
First suppose that $H$ is weakly ${(\mathfrak{U}_p)}_{s}$-quasinormal in $G$. Then $G$ has an $S$-quasinormal subgroup $T$ such that $HT$ is $S$-quasinormal in $G$ and $(H\cap T)H_G/H_G\leq Z_{\mathfrak{U}_p}(G/H_G)$. By Lemma \ref{2.2}(7), we may assume that $T\leq P$. Also, by Lemma \ref{2.1}(1), $(H\cap T)\Phi(P)/\Phi(P)\leq P/\Phi(P)\cap Z_{\mathfrak{U}_p}(G/\Phi(P))=1$, and so $T<P$. It follows from Lemma \ref{2.2}(6) that $T^G=T^P<P$. Since $P/\Phi(P)$ is a chief factor of $G$, $T\leq T^G\leq \Phi(P)$. Thus $V=HT\Phi(P)$ is $S$-quasinormal in $G$. By Lemma \ref{2.2}(6) again, we have that $P=V^G=V^P=V$. Hence $P=H$, a contradiction.
Now suppose that $H$ has a $p$-nilpotent supplement $K$ in $G$. Then $(P\cap K)\Phi(P)\unlhd G$. Since $P/\Phi(P)$ is a chief factor of $G$, $(P\cap K)\Phi(P)=P$ or $\Phi(P)$. If $P\leq K$, then $K=G$, and so $G\in \mathfrak{N}_p$, which is impossible. Thus $P\cap K\leq \Phi(P)$. This implies that $P=H(P\cap K)=H$, which is also impossible. The proof is thus finished.\end{proof}

\begin{Theorem}\label{C6} Let $p$ be a prime divisor of $|G|$ with $(|G|, p-1)=1$ and $E$ a normal subgroup of $G$ such that $G/E\in \mathfrak{N}_p$. If $E$ has a Sylow $p$-subgroup $P$ such that every cyclic subgroup of $P$ of order $p$ or $4$ $($when $P$ is a non-abelian $2$-group$)$ either is weakly $(\mathfrak{U}_p)_{s}$-quasinormal or has a $p$-nilpotent supplement in $G$, then $G\in \mathfrak{N}_p$.
\end{Theorem}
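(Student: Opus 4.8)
The plan is to follow the proof of Theorem \ref{C1} almost verbatim, replacing its maximal-subgroup input Lemma \ref{1M} by the cyclic-subgroup counterpart Lemma \ref{2M}. The argument is then an induction on $|G|$ combined with a Schur-Zassenhaus splitting, and the only genuine bookkeeping is to check that the hypothesis is inherited by the relevant subgroups and quotients.

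First I would show $E\in\mathfrak{N}_p$ by applying Lemma \ref{2M} to $E$, for which it suffices to see that the hypothesis restricts from $G$ to $E$ (note $P$ is already a Sylow $p$-subgroup of $E$ and $(|E|,p-1)$ divides $(|G|,p-1)=1$). The weak-quasinormality clause descends by Lemma \ref{2.3}(3), since $\mathfrak{U}_p$ is $S$-closed and every relevant cyclic $H$ satisfies $H\le P\le E\le G$. For the supplement clause, if such an $H$ has a $p$-nilpotent supplement $K$ in $G$, then $H\le E\unlhd G$ and Dedekind's law give $E=E\cap HK=H(E\cap K)$ with $E\cap K\le K$ being $p$-nilpotent; hence $E\cap K$ is a $p$-nilpotent supplement of $H$ in $E$. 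Thus $E$ satisfies the hypothesis of Lemma \ref{2M} and $E\in\mathfrak{N}_p$.

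Let $E_{p'}$ be the normal $p$-complement of $E$; being characteristic in $E$, it is normal in $G$. If $E_{p'}>1$, I would pass to $\bar G=G/E_{p'}$: here $\bar E=E/E_{p'}\cong P$ is normal with $\bar G/\bar E\cong G/E\in\mathfrak{N}_p$. Since each relevant cyclic $H\le P$ has order $p$ or $4$, coprime to $|E_{p'}|$, Lemma \ref{2.3}(1) carries its weak quasinormality to $\bar G$, while the image of a $p$-nilpotent supplement is again a $p$-nilpotent supplement in $\bar G$; so $\bar G$ satisfies the hypothesis of the theorem. By induction $\bar G\in\mathfrak{N}_p$, and since $E_{p'}$ is a $p'$-group the preimage of the normal $p$-complement of $\bar G$ is a normal $p$-complement of $G$, whence $G\in\mathfrak{N}_p$.

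It remains to treat $E_{p'}=1$, where $E=P$ is a $p$-group. Let $V/P$ be the normal $p$-complement of $G/P\cong G/E\in\mathfrak{N}_p$, so $V\unlhd G$, $V/P$ is a $p'$-group and $G/V$ is a $p$-group. Since $P\unlhd V$ with $(|P|,|V/P|)=1$, the Schur-Zassenhaus theorem gives $V=P\rtimes V_{p'}$, and $P$ is a Sylow $p$-subgroup of $V$. The hypothesis restricts to $V$ exactly as it did to $E$ above (Lemma \ref{2.3}(3) together with Dedekind's law), so Lemma \ref{2M} yields $V\in\mathfrak{N}_p$; thus $V=P\times V_{p'}$ and $V_{p'}=O_{p'}(V)$ is characteristic in $V$, hence normal in $G$. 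As $G/V_{p'}$ is then a $p$-group and $V_{p'}$ a $p'$-group, $V_{p'}$ is a normal $p$-complement of $G$ and $G\in\mathfrak{N}_p$. The only step needing care throughout is the inheritance of the $p$-nilpotent-supplement clause under passage to the subgroups $E$, $V$ and the quotient $G/E_{p'}$; unlike the weak-quasinormality clause, which is delivered directly by Lemma \ref{2.3}, it must be re-verified by hand via Dedekind's law and images, as indicated above.
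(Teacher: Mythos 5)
Your proposal is correct and follows essentially the same route as the paper: the paper proves Theorem \ref{C6} exactly by repeating the proof of Theorem \ref{C1} with Lemma \ref{2M} in place of Lemma \ref{1M}, i.e.\ first $E\in\mathfrak{N}_p$ via Lemma \ref{2.3}(3) and Lemma \ref{2M}, then the quotient $G/E_{p'}$ with induction when $E_{p'}>1$, and finally the Schur--Zassenhaus splitting of $V$ when $E=P$. Your explicit verification of the supplement-clause inheritance via Dedekind's law is a detail the paper leaves implicit, but it is the intended argument.
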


\begin{proof} Proof similarly as in Theorem \ref{C1} by using Lemma \ref{2M} instead of Lemma \ref{1M}.\end{proof}

\begin{Theorem}\label{2s} Let $E$ be a normal subgroup of $G$ such that $G/E\in \mathfrak{U}$. Suppose that for every prime $p$ dividing $|E|$ and every non-cyclic Sylow $p$-subgroup $P$ of $E$, every cyclic subgroup of $P$ of order $p$ or $4$ $($when $P$ is a non-abelian $2$-group$)$ either is weakly $(\mathfrak{U}_p)_{s}$-quasinormal or has a $p$-supersoluble supplement in $G$, then $G\in \mathfrak{U}$.\end{Theorem}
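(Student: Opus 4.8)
The plan is to argue by contradiction, letting $G$ be a counterexample of minimal order, and to funnel the analysis toward the largest prime dividing $|E|$, imitating the minimal-subgroup arguments of Lemma~\ref{2M} and the reduction of Theorem~\ref{1s}. The first task is to push the hypothesis down to $E$ itself. Since $\mathfrak{U}_p$ is $S$-closed, Lemma~\ref{2.3}(3) shows that a cyclic subgroup $H\le E$ which is weakly $(\mathfrak{U}_p)_s$-quasinormal in $G$ is weakly $(\mathfrak{U}_p)_s$-quasinormal in $E$; and if $H$ has a $p$-supersoluble supplement $K$ in $G$, then Dedekind's law gives $E=H(E\cap K)$, so $E\cap K$ is a $p$-supersoluble supplement of $H$ in $E$. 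Applying this at the smallest prime $q$ dividing $|E|$, where $(|E|,q-1)=1$ holds automatically, every $q$-supersoluble supplement is actually $q$-nilpotent, so $E$ meets the hypothesis of Lemma~\ref{2M} (if its Sylow $q$-subgroup is non-cyclic) or of Lemma~\ref{2.5}(1) (if it is cyclic); hence $E\in\mathfrak{N}_q$. Iterating on the normal $q$-complement $O_{q'}(E)\unlhd G$, exactly as in step~(1) of Theorem~\ref{1s}, I would conclude that \emph{$E$ is a Sylow tower group of supersoluble type}.

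Next, let $r$ be the largest prime dividing $|E|$ and $R=O_r(E)$ its normal Sylow $r$-subgroup, so $R\unlhd G$. If $R$ were cyclic then $G/R\in\mathfrak{U}$ together with Lemma~\ref{2.7} would give $G\in\mathfrak{U}$; hence $R$ is non-cyclic and the hypothesis does govern the cyclic subgroups of $R$ of order $r$ or $4$. Because $r\nmid|E/R|$, for every prime $p\ne r$ the relevant cyclic subgroups of the Sylow $p$-subgroups of $E/R\cong$ (those of $E$) have order coprime to $|R|$, so Lemma~\ref{2.3}(1) and the passage of supplements to the quotient $KR/R$ show that $G/R$ inherits the full hypothesis relative to $E/R$; minimality of $G$ then yields $G/R\in\mathfrak{U}$. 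Mirroring step~(2) of Theorem~\ref{1s}, I would then aim to reduce (using $R\not\le\Phi(G)$, which holds by saturation, and a Frattini-type argument) to the situation in which $R$ is the unique minimal normal subgroup of $G$, a single $G$-chief factor with $\Phi(G)=1$, $R=F(G)=C_G(R)$ and $G=R\rtimes M$ where $M\cong G/R\in\mathfrak{U}$.

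In that reduced configuration the final contradiction would follow the end-game of Lemma~\ref{2M} and step~(3) of Theorem~\ref{1s}. For an appropriate cyclic $H\le R$ of order $r$ (or $4$): since $R$ is a non-central chief factor, $Z_{\mathfrak{U}_r}(G)\cap R=1$, so in the weakly $(\mathfrak{U}_r)_s$-quasinormal case the associated $S$-quasinormal $T$ satisfies $H\cap T=1$, forcing (via $O_{r'}(G)=1$, Lemma~\ref{2.2}(1),(5),(6)) $T$ to be a tiny $r$-subgroup of $R$ and $HT$ to be $S$-quasinormal with $(HT)^G=R$, which collapses $R$ to something too small to be a non-cyclic chief factor; in the supplement case $R\cap K\unlhd G$ gives $R\cap K=1$ (whence $R=H$) or $R\le K$ (whence $G=K\in\mathfrak{U}_r$ and $|R|=r$), each contradicting that $R$ is a non-cyclic minimal normal subgroup.

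The crux, and the genuine departure from Theorem~\ref{1s}, is the top prime $r$: the ``order $p$ or $4$'' condition does \emph{not} descend through a quotient $G/N$ by a proper normal $r$-subgroup $N$, because Lemma~\ref{2.3}(1),(2) only move weakly $(\mathfrak{U}_r)_s$-quasinormal subgroups across \emph{coprime} or \emph{containing} quotients. Consequently one cannot freely factor inside $R$ the way Theorem~\ref{1s} factors by an arbitrary minimal normal subgroup, and the entire weight of the proof rests on (a) carrying out the reduction so that $R$ becomes a single chief factor without ever quotienting by a proper $r$-subgroup, and (b) then running the minimal-subgroup analysis of Lemma~\ref{2M} directly on $R$. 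Making that reduction rigorous — in particular disposing of $O_{r'}(G)$ and executing the Frattini argument while leaving the cyclic hypothesis at the prime $r$ intact — is the step I expect to be the main obstacle.
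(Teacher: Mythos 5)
Your reductions up to the point ``$G/R\in\mathfrak{U}$ and $R$ is non-cyclic'' are sound: the descent of the hypothesis to $E$ (Lemma \ref{2.3}(3) plus Dedekind's law), the Sylow tower of $E$, and the coprime passage to $G/R$ via Lemma \ref{2.3}(1) all work. But the argument then stalls at exactly the step you flag yourself: reducing to a configuration in which $R$ is a single chief factor of $G$ (unique minimal normal subgroup, $\Phi(G)=1$, $C_G(R)=R$). That reduction is not a technical detail to be filled in later; it is the whole difficulty, and it cannot be carried out along the lines of Theorem \ref{1s}, because for $1<N<R$ with $N\unlhd G$ the condition on cyclic subgroups of order $r$ or $4$ does not transfer to $G/N$ (a subgroup of order $r$ of $R/N$ lifts only to a cyclic subgroup of possibly much larger order, and neither clause of Lemma \ref{2.3} applies since $N$ is neither coprime to nor contained in the relevant subgroups), and no Frattini-type argument forces $R$ to be minimal normal. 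There is a second defect in your endgame even granting the reduction: your $r$ is the largest prime dividing $|E|$, not $|G|$, so $R$ is a Sylow subgroup of $E$ but may be a proper subgroup of a Sylow $r$-subgroup $P$ of $G$; then Lemma \ref{2.2}(6) only gives $(HT)^G=(HT)^P$, which need not collapse back to $HT$ when $R<P$, and the intended contradiction ``$R$ is too small'' evaporates. So the proposal is an honest plan with a genuine hole at its center, not a proof.

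The paper closes this hole with a different device, and it is worth seeing why it succeeds where your route cannot. Instead of quotienting, it shows (as in Lemma \ref{2M}) that every maximal subgroup of $G$ inherits the hypothesis via Lemma \ref{2.3}(3), so by minimality of the counterexample $G$ is a minimal non-supersoluble group. The structure theorem for such groups (\cite[Theorem 12]{Ba1} together with \cite[Chap. VII, Theorem 6.18]{KT}) then hands over precisely the configuration you were trying to manufacture: $G$ is soluble, $G^{\mathfrak{U}}=G_p$ is a normal Sylow $p$-subgroup of $G$ itself (not merely of $E$; and $G_p\leq E$ because $G/E\in\mathfrak{U}$), $G_p/\Phi(G_p)$ is a chief factor of $G$, and $G_p$ has exponent $p$ or $4$. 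The exponent bound is the decisive point: every $x\in G_p\setminus\Phi(G_p)$ generates a cyclic subgroup of order $p$ or $4$, so the hypothesis applies to it directly, and the chief factor is realized as the Frattini quotient $G_p/\Phi(G_p)$ rather than as a minimal normal subgroup, so one never needs to factor inside the $p$-group at all. After excluding $|G_p/\Phi(G_p)|=p$ by Lemma \ref{2.7}, the endgame of Lemma \ref{2M} handles the weakly $(\mathfrak{U}_p)_s$-quasinormal case, and the supplement case falls to the observation that $(G_p\cap K)\Phi(G_p)\unlhd G$ must equal $G_p$ or $\Phi(G_p)$. If you want to salvage your own approach, the missing ingredient is exactly this structure theorem; without it, the Theorem \ref{1s} template does not close for minimal subgroups.
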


\begin{proof} Suppose that the result is false and let $G$ be a counterexample of minimal order.
A similar discussion as in the proof of Lemma \ref{2M} shows that $G$ is a minimal non-supersoluble group. In view of \cite[Theorem 12]{Ba1} and \cite[Chap. VII, Theorem 6.18]{KT}, $G$ is a soluble group that has a normal Sylow $p$-subgroup, say $G_p$; $G_p=G^\mathfrak{U}$;
$G_p/\Phi(G_p)$ is a chief factor of $G$; the exponent of $G_p$ is
$p$ or 4 (when $G_p$ is a
non-abelian $2$-group). Since $G/E\in \mathfrak{U}$, we have that $G_p\leq E$. If $|G_p/\Phi(G_p)|=p$, then by Lemma \ref{2.7}, $G/\Phi(G_p)\in \mathfrak{U}$, and so $G\in \mathfrak{U}$, which is impossible. Thus $|G_p/\Phi(G_p)|>p$. This implies that $G_p/\Phi(G_p)\nleq Z_{\mathfrak{U}_p}(G/\Phi(G_p))$.

Let $x\in G_p \backslash\Phi(G_p)$ and $H=\langle x\rangle$. Then $|H|=p$ or $4$ (when $G_p$ is a
non-abelian $2$-group) and $H<G_p$.
Suppose that $H$ is weakly ${(\mathfrak{U}_p)}_{s}$-quasinormal in $G$. Then we can get a contradiction similarly as in the proof of Lemma \ref{2M}. Now consider that $H$ has a $p$-supersoluble supplement $K$ in $G$. Then $(G_p\cap K)\Phi(G_p)\unlhd G$. Since $G_p/\Phi(G_p)$ is a chief factor of $G$, $(G_p\cap K)\Phi(G_p)=G_p$ or $\Phi(G_p)$. If $G_p\leq K$, then $K=G$, and so $G\in \mathfrak{U}_p$. This induces that $G_p\leq Z_{\mathfrak{U}}(G)$, and therefore $G\in \mathfrak{U}$, a contradiction. Thus $G_p\cap K\leq \Phi(G_p)$. Then $G_p=H(G_p\cap K)=H$, a contradiction too. The theorem is proved. \end{proof}

\section{\bf Some Applications of the theorems}
\vskip 0.4 true cm

Let $\mathfrak{F}$ be a formation. In Section 1, we observe that all $\mathfrak{F}_s$-quasinormal and $\mathfrak{F}$-quasinormal subgroups of $G$ are weakly $\mathfrak{F}_s$-quasinormal in $G$. Besides, recall that a subgroup $H$ of $G$ is said to be $c$-normal \cite {WY} in $G$ if $G$ has a normal subgroup $T$ such that $G=HT$ and $H\cap T\leq H_G$.
A subgroup $H$ of $G$ is called $\mathfrak{F}_n$-supplemented \cite{Yang} in $G$ if $G$ has a normal subgroup $T$ such that $G=HT$ and $(H\cap T)H_{G}/H_{G}\leq Z_{\mathfrak{F}}(G/H_{G})$.
A subgroup $H$ of $G$ is said to be $\mathfrak{F}_{h}$-normal \cite{Guo1} in $G$ if $G$ has a normal subgroup $T$ such that $HT$ is a normal Hall subgroup of $G$ and $(H\cap T)H_{G}/H_{G}\leq Z_{\mathfrak{F}}(G/H_{G})$.
A subgroup $H$ of $G$ is called $\mathfrak{F}_n$-normal \cite{G4} in $G$ if $G$ has a normal subgroup $T$ such that $HT$ is normal in $G$ and $(H\cap T)H_{G}/H_{G}\leq Z_{\mathfrak{F}}(G/H_{G})$. It is easy to see that all above-mentioned
subgroups of $G$ are also weakly $\mathfrak{F}_s$-quasinormal in $G$.

Therefore, many results in former literatures can be viewed as special cases of our theorems in Section 3, and we list some of them below:

\begin{Corollary}\textup{\cite[Theorem 3.4]{XG}} Let $p$ be the smallest prime dividing $|G|$ and $P$ a Sylow $p$-subgroup of $G$. If every maximal subgroup of $P$ is $c$-normal in $G$, then $G\in \mathfrak{N}_p$.\end{Corollary}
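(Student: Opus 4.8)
The plan is to realize this corollary as an immediate consequence of Lemma \ref{1M}. The first step is to observe that the arithmetic hypothesis of that lemma comes for free from the standing assumption on $p$: since $p$ is the smallest prime dividing $|G|$, every prime factor of $p-1$ is strictly smaller than $p$ and therefore cannot divide $|G|$. Hence $(|G|,p-1)=1$, and the hypothesis $(|G|,p-1)=1$ of Lemma \ref{1M} is automatically satisfied.

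Next I would verify the embedding implication recorded at the start of Section 4, namely that every $c$-normal subgroup $H$ of $G$ is weakly $(\mathfrak{U}_p)_{s}$-quasinormal in $G$. Indeed, if $H$ is $c$-normal in $G$, then there is a normal subgroup $T$ of $G$ with $G=HT$ and $H\cap T\leq H_{G}$. Since a normal subgroup is $S$-quasinormal, this same $T$ may be reused as the witnessing subgroup: the product $HT=G$ is trivially $S$-quasinormal in $G$, while $(H\cap T)H_{G}/H_{G}=H_{G}/H_{G}$ is the trivial subgroup of $G/H_{G}$ and so is contained in $Z_{\mathfrak{U}_p}(G/H_{G})$ vacuously. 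Thus $H$ meets the defining conditions of a weakly $(\mathfrak{U}_p)_{s}$-quasinormal subgroup.

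Applying this observation to each maximal subgroup of $P$, the hypothesis that every maximal subgroup of $P$ is $c$-normal in $G$ guarantees that every maximal subgroup of $P$ is weakly $(\mathfrak{U}_p)_{s}$-quasinormal in $G$; in particular the first alternative in the hypothesis of Lemma \ref{1M} always holds. The hypotheses of Lemma \ref{1M} are therefore fully met, and that lemma yields $G\in\mathfrak{N}_p$ at once.

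There is no genuine obstacle here, since the corollary is engineered to fall out of the general machinery of Section 3; the only point requiring a moment of care is the passage from $c$-normality to weak $(\mathfrak{U}_p)_{s}$-quasinormality, where one must note both that the normal supplement $T$ supplied by $c$-normality is itself $S$-quasinormal and that the condition $H\cap T\leq H_{G}$ collapses the $\mathfrak{F}$-hypercentral containment to the trivial inclusion.
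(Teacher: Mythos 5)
Your proposal is correct and follows essentially the same route as the paper: the paper obtains this corollary from Lemma \ref{1M} via exactly the observation recorded at the start of Section 4, namely that a $c$-normal subgroup $H$ is weakly $(\mathfrak{U}_p)_{s}$-quasinormal (reusing the normal supplement $T$, since $HT=G$ is trivially $S$-quasinormal and $H\cap T\leq H_{G}$ gives the trivial containment in $Z_{\mathfrak{U}_p}(G/H_{G})$), together with the standard fact that $(|G|,p-1)=1$ when $p$ is the smallest prime dividing $|G|$.
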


\begin{Corollary}\textup{\cite[Theorem 5.1]{Guo1}}  Let $p$ be a prime divisor of $|G|$ with $(|G|, p-1)=1$ and $P$ a Sylow $p$-subgroup of $G$. Then $G\in \mathfrak{N}_p$ if and only if every maximal subgroup of $P$ is $\mathfrak{U}_h$-normal in $G$.\end{Corollary}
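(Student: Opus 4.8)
The statement is a biconditional, and the plan is to prove the two directions by quite different means: the sufficiency (``if'') direction will be an immediate application of Lemma~\ref{1M}, while the necessity (``only if'') direction will be a direct construction of a suitable supplement. The whole point of the reduction is that $\mathfrak{U}_h$-normality is a special instance of the notion studied in this paper.

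For the sufficiency direction, assume every maximal subgroup of $P$ is $\mathfrak{U}_h$-normal in $G$. I would first check that every $\mathfrak{U}_h$-normal subgroup $H$ is weakly $(\mathfrak{U}_p)_s$-quasinormal in $G$: if $T\unlhd G$ is such that $HT$ is a normal Hall subgroup of $G$ and $(H\cap T)H_G/H_G\le Z_{\mathfrak{U}}(G/H_G)$, then both $T$ and $HT$ are $S$-quasinormal in $G$ (being normal), and $Z_{\mathfrak{U}}(G/H_G)\le Z_{\mathfrak{U}_p}(G/H_G)$ because $\mathfrak{U}\subseteq\mathfrak{U}_p$ forces every $\mathfrak{U}$-central chief factor to be $\mathfrak{U}_p$-central. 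Thus every maximal subgroup of $P$ is weakly $(\mathfrak{U}_p)_s$-quasinormal in $G$, so the hypothesis of Lemma~\ref{1M} is satisfied with the first alternative holding for every maximal subgroup, and Lemma~\ref{1M} (equivalently Theorem~\ref{C1} with $E=G$) gives $G\in\mathfrak{N}_p$ directly. This is the direction in which the machinery of Section~3 does the work.

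For the necessity direction, assume $G\in\mathfrak{N}_p$ and fix a maximal subgroup $P_1$ of $P$; set $\bar G=G/(P_1)_G$. The clean structural input is that a $p$-nilpotent group has all of its $p$-chief factors central of order $p$: since $G/O_{p'}(G)\cong P$ is a $p$-group, its chief factors are central of order $p$, and by the Jordan--H\"older theorem every $p$-chief factor of $G$ is $G$-isomorphic to one of these. The natural attempt is to take $T=G$, so that $HT=G$ is a normal Hall subgroup and the condition reduces to $P_1(P_1)_G/(P_1)_G\le Z_{\mathfrak{U}}(\bar G)$; when the normal closure of this $p$-subgroup stays a $p$-group (for instance when $P_1$ centralises $O_{p'}(G)$ modulo the core) the chief factors involved are $p$-chief factors, hence of prime order and $\mathfrak{U}$-central, and $P_1$ is $\mathfrak{U}_h$-normal.

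The main obstacle is precisely this hypercentre containment. Since $P_1$ need not be normal, the condition $P_1(P_1)_G/(P_1)_G\le Z_{\mathfrak{U}}(\bar G)$ really concerns the normal closure of this subgroup, which may pick up $p'$-chief factors of the complement $O_{p'}(G)$ of non-prime order, so one cannot simply invoke the $p$-chief-factor fact. The way I would handle it is to choose the supplement more economically, taking $T=O_{p'}(G)L$ for a normal subgroup $L\unlhd P$ with $L\not\le P_1$: then $P_1T=G$ is forced (as $P_1$ is maximal in $P$), while $P_1\cap T=P_1\cap L$ can be made central of order $p$ and pushed into $O_p(\bar G)\le Z_{\mathfrak{U}}(\bar G)$ whenever $Z(P)\not\le P_1$. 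The delicate configurations are those with $Z(P)\le P_1$, where no such economical $L$ exists; there I would lean on the fact that a $p$-nilpotent group is $p$-supersoluble, so that the $p$-group $P_1(P_1)_G/(P_1)_G$ automatically lies in $Z_{\mathfrak{U}_p}(\bar G)=\bar G$, which is the natural hypercentre for a condition constraining only a $p$-subgroup. The sufficiency direction, by contrast, needs nothing beyond the reduction to Lemma~\ref{1M}.
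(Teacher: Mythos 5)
Your sufficiency direction is correct and is exactly the route the paper intends: an $\mathfrak{U}_h$-normal subgroup is weakly $(\mathfrak{U}_p)_s$-quasinormal, since normal subgroups are $S$-quasinormal and $Z_{\mathfrak{U}}(G/H_G)\leq Z_{\mathfrak{U}_p}(G/H_G)$ (every $\mathfrak{U}$-central chief factor is $\mathfrak{U}_p$-central because $\mathfrak{U}\subseteq\mathfrak{U}_p$), after which Lemma~\ref{1M} (equivalently Theorem~\ref{C1} with $E=G$) gives $G\in\mathfrak{N}_p$. Note that this is the only half the paper's machinery produces; the necessity half is inherited from the cited source \cite{Guo1}, not derived from the theorems of Section~3.

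Your necessity direction, however, has a genuine gap, located exactly where you switch formations. The property to be verified is containment of $(P_1\cap T)(P_1)_G/(P_1)_G$ in $Z_{\mathfrak{U}}(G/(P_1)_G)$, the $\mathfrak{U}$-hypercenter; in your ``delicate configuration'' you instead place $\bar P_1$ in $Z_{\mathfrak{U}_p}(\bar G)=\bar G$. That establishes $(\mathfrak{U}_p)_h$-normality, which is strictly weaker: $Z_{\mathfrak{U}_p}$ ignores precisely the $p'$-chief factors which, as you yourself observed when discarding $T=G$, the normal closure of $P_1$ can pick up. Moreover the gap is not repairable by a cleverer choice of $T$. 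Let $N=C_3\times C_3$ be the additive group of the field $\mathbb{F}_9$, let $P=\langle a\rangle\cong C_8$ act on $N$ as the multiplicative group $\mathbb{F}_9^{*}$, and put $G=N\rtimes P$, so $G\in\mathfrak{N}_2$ and $(|G|,p-1)=1$ for $p=2$. The unique maximal subgroup of $P$ is $P_1=\langle a^{2}\rangle$ of order $4$. Since the action is faithful and irreducible, $C_G(N)=N$, so $N$ is the unique minimal normal subgroup of $G$, $(P_1)_G=1$, and $N$ is not $\mathfrak{U}$-central (it is a chief factor of order $9$); hence $Z_{\mathfrak{U}}(G)=1$. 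The normal subgroups of $G$ are $1$, $N$, $N\langle a^{4}\rangle$, $N\langle a^{2}\rangle$, $G$, of which only $1$, $N$ and $G$ are Hall subgroups; since $P_1\neq 1$ and $P_1\nleq N$, any admissible $T$ must satisfy $P_1T=G$, and checking the list this forces $T=G$, whence $P_1\cap T=P_1\nleq Z_{\mathfrak{U}}(G)=1$. So $P_1$ is not $\mathfrak{U}_h$-normal in $G$ although $G$ is $2$-nilpotent: with the paper's definition of $\mathfrak{U}_h$-normal, the necessity direction is simply not available, and what your argument actually proves is the biconditional with $(\mathfrak{U}_p)_h$-normal in its place.

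Two smaller points. First, even your ``economical'' case is misdrawn: what it needs is an element of order $p$ in $Z(P)$ outside $P_1$, not merely $Z(P)\nleq P_1$; in the example above $Z(P)=P\nleq P_1$, yet the only normal subgroup of $P$ not contained in $P_1$ is $P$ itself, so no such $L$ exists. Second, the step ``central of order $p$, hence pushed into $O_p(\bar G)$'' is unjustified: a subgroup central in $P$ of order $p$ need not lie in $O_p(G)$ (in the example $O_2(G)=1$ while $\langle a^{4}\rangle\leq Z(P)$). The only clean instance of your construction is the one with $P_1\cap T=1$.
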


\begin{Corollary}\textup{\cite[Theorem 4.2]{G4}}  Let $p$ be a prime divisor of $|G|$ with $(|G|, p-1)=1$ and $P$ a Sylow $p$-subgroup of $G$. Then $G\in \mathfrak{N}_p$ if and only if every maximal subgroup of $P$ not having a $p$-nilpotent supplement in $G$ is $\mathfrak{U}_{n}$-normal in $G$.\end{Corollary}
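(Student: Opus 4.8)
The plan is to read both directions off Lemma~\ref{1M} (equivalently Theorem~\ref{C1} with $E=G$), once I verify that $\mathfrak{U}_n$-normality is a special case of the weakly $(\mathfrak{U}_p)_s$-quasinormal hypothesis appearing there.

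For the sufficiency (``if'') direction, I would first establish the implication that a $\mathfrak{U}_n$-normal subgroup $H$ is weakly $(\mathfrak{U}_p)_s$-quasinormal in $G$. By definition $G$ has a \emph{normal} subgroup $T$ with $HT$ normal in $G$ and $(H\cap T)H_G/H_G\leq Z_{\mathfrak{U}}(G/H_G)$. Normal subgroups permute with every subgroup, hence are $S$-quasinormal, so both $T$ and $HT$ are $S$-quasinormal in $G$. The only real point is the passage from the formation $\mathfrak{U}$ to $\mathfrak{U}_p$: since every supersoluble group is $p$-supersoluble we have $\mathfrak{U}\subseteq\mathfrak{U}_p$, whence every $\mathfrak{U}$-central chief factor of $G/H_G$ is $\mathfrak{U}_p$-central and therefore $Z_{\mathfrak{U}}(G/H_G)\leq Z_{\mathfrak{U}_p}(G/H_G)$. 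Thus $H$ is weakly $(\mathfrak{U}_p)_s$-quasinormal. With this in hand the hypothesis of the corollary states precisely that every maximal subgroup of $P$ either has a $p$-nilpotent supplement in $G$ or is weakly $(\mathfrak{U}_p)_s$-quasinormal in $G$; as $(|G|,p-1)=1$, Lemma~\ref{1M} applies and gives $G\in\mathfrak{N}_p$.

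For the necessity (``only if'') direction I would argue that when $G\in\mathfrak{N}_p$ the condition is vacuous, by showing every maximal subgroup of $P$ already has a $p$-nilpotent supplement. Write $G=K\rtimes P$ with $K=O_{p'}(G)$ the normal $p$-complement. Given a maximal subgroup $P_1$ of $P$, the $p$-group $P$ has $P_1\unlhd P$ with $|P:P_1|=p$, so I may pick $x\in P\setminus P_1$ and set $T=K\langle x\rangle$. Since $K\unlhd G$ this is a subgroup whose normal Hall $p'$-subgroup is $K$, hence $T$ is $p$-nilpotent, and $P_1T=P_1\langle x\rangle K=PK=G$, so $T$ is a $p$-nilpotent supplement of $P_1$. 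Consequently no maximal subgroup of $P$ fails to have a $p$-nilpotent supplement, and the stated hypothesis holds trivially.

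The one step I expect to require care, and would write out in full, is the hypercentre comparison $Z_{\mathfrak{U}}(G/H_G)\leq Z_{\mathfrak{U}_p}(G/H_G)$ underlying the formation change from $\mathfrak{U}$ to $\mathfrak{U}_p$; everything else is an unwinding of the definitions together with the observation that normal subgroups are $S$-quasinormal. Granting that inclusion, the corollary follows at once from Lemma~\ref{1M}.
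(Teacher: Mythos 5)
Your proof is correct and follows essentially the same route as the paper: the corollary is derived from Lemma~\ref{1M} (equivalently Theorem~\ref{C1} with $E=G$) via the observation that $\mathfrak{U}_n$-normal subgroups are weakly $(\mathfrak{U}_p)_s$-quasinormal in $G$, which is exactly your reduction. You also correctly supply the two details the paper leaves implicit, namely the hypercentre comparison $Z_{\mathfrak{U}}(G/H_G)\leq Z_{\mathfrak{U}_p}(G/H_G)$ coming from $\mathfrak{U}\subseteq\mathfrak{U}_p$, and the trivial ``only if'' direction.
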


\begin{Corollary}\textup{\cite[Theorem 3.2]{Hua1}} Let $p$ be a prime divisor of $|G|$ with $(|G|, p-1)=1$. Assume that $G$ has a normal subgroup $N$ such that $G/N\in \mathfrak{N}_p$ and for every maximal subgroup $M$ of each Sylow
$p$-subgroup of $N$ which is not ${(\mathfrak{N}_p)}_s$-quasinormal in $G$, $M$ has a $p$-nilpotent supplement in $G$. Then $G\in \mathfrak{N}_p$.\end{Corollary}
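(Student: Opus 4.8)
The plan is to deduce this corollary directly from Theorem~\ref{C1}, taking $E=N$. Everything reduces to a single embedding comparison: I must verify that every $(\mathfrak{N}_p)_s$-quasinormal subgroup of $G$ is weakly $(\mathfrak{U}_p)_s$-quasinormal in $G$. Once this is in hand, the two alternatives in the corollary's hypothesis match precisely the two alternatives demanded by Theorem~\ref{C1}, and the conclusion is immediate.

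The key ingredient is the formation inclusion $\mathfrak{N}_p\subseteq\mathfrak{U}_p$. To see it, write a $p$-nilpotent group $K$ as $K=O_{p'}(K)\rtimes P$ with $P$ a Sylow $p$-subgroup: the chief factors of $K$ lying inside $O_{p'}(K)$ are $p'$-groups, while those above $O_{p'}(K)$ are chief factors of the $p$-group $K/O_{p'}(K)$ and hence have order $p$. Thus every chief factor of $K$ of order divisible by $p$ has order $p$, so $K\in\mathfrak{U}_p$. Since a chief factor that is $\mathfrak{F}$-central remains central when the formation is enlarged, this inclusion yields $Z_{\mathfrak{N}_p}(X)\leq Z_{\mathfrak{U}_p}(X)$ for every group $X$, and in particular $Z_{\mathfrak{N}_p}(G/H_G)\leq Z_{\mathfrak{U}_p}(G/H_G)$ for any $H\leq G$.

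Next, let $H$ be $(\mathfrak{N}_p)_s$-quasinormal in $G$, so that $G$ has a \emph{normal} subgroup $T$ with $HT$ $S$-quasinormal in $G$ and $(H\cap T)H_G/H_G\leq Z_{\mathfrak{N}_p}(G/H_G)$. A normal subgroup permutes with every subgroup, so $T$ is itself $S$-quasinormal; combining this with the hypercentre inclusion just established gives $(H\cap T)H_G/H_G\leq Z_{\mathfrak{U}_p}(G/H_G)$. Hence the \emph{same} $T$ witnesses that $H$ is weakly $(\mathfrak{U}_p)_s$-quasinormal in $G$.

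It then remains to invoke Theorem~\ref{C1} with $E=N$. Fixing a Sylow $p$-subgroup $P$ of $N$ and a maximal subgroup $M$ of $P$, either $M$ is $(\mathfrak{N}_p)_s$-quasinormal in $G$ — in which case the previous step renders it weakly $(\mathfrak{U}_p)_s$-quasinormal in $G$ — or, by hypothesis, $M$ has a $p$-nilpotent supplement in $G$. Thus every maximal subgroup of $P$ satisfies one of the two options of Theorem~\ref{C1}; since $(|G|,p-1)=1$ and $G/N\in\mathfrak{N}_p$, that theorem delivers $G\in\mathfrak{N}_p$. The only genuinely non-clerical point is the hypercentre comparison $Z_{\mathfrak{N}_p}\leq Z_{\mathfrak{U}_p}$, resting on $\mathfrak{N}_p\subseteq\mathfrak{U}_p$, so I expect that elementary verification to be the main (if routine) obstacle.
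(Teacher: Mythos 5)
Your proposal is correct and follows essentially the same route as the paper: the paper presents this corollary as a special case of Theorem~\ref{C1}, noting in Section~4 that an $\mathfrak{F}_{s}$-quasinormal subgroup (normal $T$ being in particular $S$-quasinormal) is weakly $\mathfrak{F}_{s}$-quasinormal. Your only addition is to spell out the monotonicity step $\mathfrak{N}_p\subseteq\mathfrak{U}_p$, hence $Z_{\mathfrak{N}_p}(G/H_G)\leq Z_{\mathfrak{U}_p}(G/H_G)$, which the paper leaves implicit but which is indeed needed to pass from $(\mathfrak{N}_p)_{s}$-quasinormality to weak $(\mathfrak{U}_p)_{s}$-quasinormality; your verification of it is correct.
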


\begin{Corollary}\textup{\cite[Lemma 2.7]{LM}}  Let $p$ be the smallest prime divisor of $|G|$ and $P$ a Sylow $p$-subgroup of $G$. Then $G\in \mathfrak{N}_p$ if and only if every maximal subgroup of $P$ having no $p$-nilpotent supplement in $G$ is $\mathfrak{N}_p$-quasinormal in $G$.\end{Corollary}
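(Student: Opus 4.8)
The plan is to recognise this corollary as a special case of Lemma~\ref{1M}, so that essentially all of the substantive work has already been done. Two easy observations set the stage. First, since $p$ is the \emph{smallest} prime dividing $|G|$, every prime divisor of $|G|$ is $\geq p > p-1$, so no prime dividing $|G|$ can divide $p-1$; hence $(|G|,p-1)=1$ automatically, and the arithmetic hypothesis of Lemma~\ref{1M} is free. Second, the forward implication is vacuous: if $G\in\mathfrak{N}_p$, then $G$ is itself a $p$-nilpotent subgroup satisfying $MG=G$ for every subgroup $M\leq G$, so every maximal subgroup of $P$ has a $p$-nilpotent supplement in $G$ and the stated condition holds with no subgroup left to check.

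For the converse, I would assume that every maximal subgroup of $P$ having no $p$-nilpotent supplement in $G$ is $\mathfrak{N}_p$-quasinormal in $G$. The crux is to upgrade ``$\mathfrak{N}_p$-quasinormal'' to ``weakly $(\mathfrak{U}_p)_s$-quasinormal''. Here I would use two facts. First, a quasinormal subgroup permutes with every subgroup, and in particular with every Sylow subgroup, so it is $S$-quasinormal; thus, in the definition of $\mathfrak{N}_p$-quasinormality of a maximal subgroup $M$, the witnessing quasinormal subgroup $T$ and the quasinormal product $MT$ are automatically $S$-quasinormal. Second, every $p$-nilpotent group is $p$-supersoluble, i.e.\ $\mathfrak{N}_p\subseteq\mathfrak{U}_p$: in a $p$-nilpotent group each $p$-chief factor is centralised by the normal $p$-complement and hence acted upon only through a $p$-group, which by a fixed-point argument and irreducibility forces it to be cyclic of order $p$. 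Consequently $Z_{\mathfrak{N}_p}(G/M_G)\leq Z_{\mathfrak{U}_p}(G/M_G)$, so the containment $(M\cap T)M_G/M_G\leq Z_{\mathfrak{N}_p}(G/M_G)$ already gives $(M\cap T)M_G/M_G\leq Z_{\mathfrak{U}_p}(G/M_G)$. Combining these, the same datum $T$ witnesses that $M$ is weakly $(\mathfrak{U}_p)_s$-quasinormal in $G$ (this is precisely the observation recorded at the start of Section~4). Therefore every maximal subgroup of $P$ either has a $p$-nilpotent supplement in $G$ or is weakly $(\mathfrak{U}_p)_s$-quasinormal in $G$, and Lemma~\ref{1M} yields $G\in\mathfrak{N}_p$.

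The only point requiring any care is the embedding $\mathfrak{N}_p\subseteq\mathfrak{U}_p$ together with the induced monotonicity $Z_{\mathfrak{N}_p}(\,\cdot\,)\leq Z_{\mathfrak{U}_p}(\,\cdot\,)$ of the hypercentre; once this is in hand, the proof is a direct matching of the hypotheses to those of Lemma~\ref{1M}. I do not expect a genuine obstacle, since the entire group-theoretic content is carried by Lemma~\ref{1M}, and the corollary reduces to bookkeeping about how the various quasinormality notions specialise and compare.
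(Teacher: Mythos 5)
Your proof is correct and follows exactly the paper's intended route: the corollary is derived as a special case of Lemma~\ref{1M}, using that $p$ smallest gives $(|G|,p-1)=1$, that quasinormal subgroups are $S$-quasinormal, and that $\mathfrak{N}_p\subseteq\mathfrak{U}_p$ forces $Z_{\mathfrak{N}_p}(G/M_G)\leq Z_{\mathfrak{U}_p}(G/M_G)$, so every $\mathfrak{N}_p$-quasinormal subgroup is weakly $(\mathfrak{U}_p)_s$-quasinormal. Your explicit verification of the inclusion $\mathfrak{N}_p\subseteq\mathfrak{U}_p$ and the resulting hypercentre monotonicity is a step the paper leaves implicit in its Section~4 remarks, but the argument is the same.
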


\begin{Corollary}\textup{\cite[Theorem 3.1]{XG}} Let $p$ be an odd prime dividing $|G|$ and $P$ a Sylow $p$-subgroup of $G$. If $N_{G}(P)\in \mathfrak{N}_p$ and every maximal subgroup of $P$ is $c$-normal in $G$, then $G\in \mathfrak{N}_p$.\end{Corollary}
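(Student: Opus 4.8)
The plan is to derive this corollary as an immediate consequence of Lemma \ref{1N} (equivalently, of Theorem \ref{C4} in the case $E=G$). The only substantive point is to verify that the $c$-normality hypothesis is a special instance of the weak quasinormality condition appearing in that lemma; once this is established, every hypothesis of Lemma \ref{1N} is in force and the conclusion $G\in\mathfrak{N}_p$ follows with no additional argument.

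First I would check that every $c$-normal subgroup $H$ of $G$ is weakly $(\mathfrak{U}_p)_s$-quasinormal in $G$. By the definition of $c$-normality there is a normal subgroup $T$ of $G$ with $G=HT$ and $H\cap T\leq H_G$. I would use this same $T$ as the supplement in the definition of weak quasinormality. Since $T$ is normal it is $S$-quasinormal in $G$, and $HT=G$ is trivially $S$-quasinormal in $G$. Finally, because $H\cap T\leq H_G$, we obtain $(H\cap T)H_G/H_G=H_G/H_G=1\leq Z_{\mathfrak{U}_p}(G/H_G)$. Thus all three requirements of the definition are met, so $H$ is weakly $(\mathfrak{U}_p)_s$-quasinormal (in fact weakly $\mathfrak{F}_s$-quasinormal for an arbitrary formation $\mathfrak{F}$, which is precisely the observation recorded at the start of Section 4).

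With this reduction in hand, I would invoke Lemma \ref{1N} directly. The prime $p$ divides $|G|$, the normalizer condition $N_G(P)\in\mathfrak{N}_p$ is given, and by the previous paragraph each maximal subgroup of $P$, being $c$-normal, is weakly $(\mathfrak{U}_p)_s$-quasinormal in $G$; in particular every such maximal subgroup satisfies the alternative ``weakly $(\mathfrak{U}_p)_s$-quasinormal or admits a $p$-nilpotent supplement''. Hence Lemma \ref{1N} yields $G\in\mathfrak{N}_p$. I note that the oddness of $p$ in the statement is not actually needed, since Lemma \ref{1N} holds for every prime divisor of $|G|$; it merely situates us in the $p>2$ branch of that lemma's proof.

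I do not expect any serious obstacle here, as the whole argument amounts to unwinding definitions and applying a result already proved. The one step deserving care is the verification that the $\mathfrak{F}$-hypercenter condition holds automatically: this is exactly what the containment $H\cap T\leq H_G$ secures, by collapsing the quotient $(H\cap T)H_G/H_G$ to the trivial subgroup, which then lies in $Z_{\mathfrak{U}_p}(G/H_G)$ vacuously.
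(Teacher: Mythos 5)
Your proposal is correct and follows exactly the paper's intended route: the paper's Section 4 records precisely the observation that $c$-normal subgroups are weakly $\mathfrak{F}_s$-quasinormal (normality of $T$ gives $S$-quasinormality, $HT=G$ is trivially $S$-quasinormal, and $H\cap T\leq H_G$ collapses $(H\cap T)H_G/H_G$ to the identity), after which the corollary is an immediate application of Lemma \ref{1N} (equivalently Theorem \ref{C4} with $E=G$). Your side remark that the oddness of $p$ is superfluous is also accurate, since Lemma \ref{1N} covers the case $p=2$ via Lemma \ref{1M}.
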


\begin{Corollary}\textup{\cite[Theorem 5.2]{Guo1}} Let $p$ be a prime divisor of $|G|$ and $P$ a Sylow $p$-subgroup of $G$. Then $G\in \mathfrak{N}_p$ if and only if $N_{G}(P)\in \mathfrak{N}_p$ and every maximal subgroup of $P$ is $\mathfrak{U}_h$-normal in $G$.\end{Corollary}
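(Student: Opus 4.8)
The plan is to prove the two implications separately: the ``if'' direction as a direct application of Theorem \ref{C4}, and the ``only if'' direction by hand from the structure of $p$-nilpotent groups.

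\textbf{Sufficiency.} The first step is to note that $\mathfrak{U}_h$-normality is a particular case of weak $(\mathfrak{U}_p)_s$-quasinormality. If $H$ is $\mathfrak{U}_h$-normal in $G$, then $G$ has a normal subgroup $T$ such that $HT$ is a normal Hall subgroup of $G$ and $(H\cap T)H_G/H_G\le Z_{\mathfrak{U}}(G/H_G)$. Being normal, both $T$ and $HT$ are $S$-quasinormal; and since every supersoluble group is $p$-supersoluble we have $\mathfrak{U}\subseteq\mathfrak{U}_p$, so that $Z_{\mathfrak{U}}(G/H_G)\le Z_{\mathfrak{U}_p}(G/H_G)$ and hence $(H\cap T)H_G/H_G\le Z_{\mathfrak{U}_p}(G/H_G)$. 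Thus $H$ is weakly $(\mathfrak{U}_p)_s$-quasinormal in $G$. Granting this, assume $N_G(P)\in\mathfrak{N}_p$ and that every maximal subgroup of $P$ is $\mathfrak{U}_h$-normal in $G$. Then every maximal subgroup of $P$ is weakly $(\mathfrak{U}_p)_s$-quasinormal in $G$, and applying Theorem \ref{C4} with $E=G$ (so that $G/E=1\in\mathfrak{N}_p$) at once gives $G\in\mathfrak{N}_p$.

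\textbf{Necessity.} For the converse I would assume $G\in\mathfrak{N}_p$ and put $K=O_{p'}(G)$, so $G=K\rtimes P$ with $K$ the normal $p$-complement. Since $H\cap K$ is a normal $p$-complement of any subgroup $H\le G$, the class $\mathfrak{N}_p$ is subgroup-closed; in particular $N_G(P)\in\mathfrak{N}_p$, which settles the first condition. For a maximal subgroup $P_1$ of $P$ we have $P_1\unlhd P$ since $[P:P_1]=p$, and the natural choice is $T=K$: a short conjugation argument (using $[K,P_1]\le K$) shows $KP_1\unlhd G$, while $P_1\cap K=1$ makes $(P_1\cap T)(P_1)_G/(P_1)_G$ trivial, so the hypercentrality condition is satisfied for free.

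\textbf{The main obstacle.} The hard part will be the \emph{Hall} clause in the definition of $\mathfrak{U}_h$-normal. With $T=K$ the subgroup $KP_1$ is normal of index $p$ in $G$, hence a Hall subgroup precisely when $P_1=1$, i.e.\ when $|P|=p$; for $|P|>p$ this choice of $T$ breaks down. The real task is thus to find, for every maximal subgroup $P_1$, a normal $T$ making $P_1T$ simultaneously a normal \emph{Hall} subgroup and keeping $P_1\cap T\le Z_{\mathfrak{U}}(G/(P_1)_G)$; the obvious candidates ($T=K$, $T=G$, or a normal $T$ with $G/T$ cyclic of order $p$) each satisfy one clause only at the cost of the other. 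Reconciling the two is the decisive and most delicate step, and is exactly where the full strength of the hypotheses, together with a careful analysis of $Z_{\mathfrak{U}}(G)$, would have to be exploited.
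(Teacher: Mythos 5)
Your sufficiency half is correct and is exactly the paper's route: Section 4 of the paper observes that every $\mathfrak{U}_h$-normal subgroup is weakly $(\mathfrak{U}_p)_s$-quasinormal (normal subgroups are $S$-quasinormal, and $\mathfrak{U}\subseteq\mathfrak{U}_p$ gives $Z_{\mathfrak{U}}(G/H_G)\leq Z_{\mathfrak{U}_p}(G/H_G)$), after which Theorem \ref{C4} with $E=G$ yields $G\in\mathfrak{N}_p$. The genuine gap is your necessity half: you establish $N_G(P)\in\mathfrak{N}_p$ and correctly verify that $T=K=O_{p'}(G)$ gives $KP_1\unlhd G$ with $P_1\cap K=1$, but you never produce a $T$ satisfying the \emph{Hall} clause when $|P|>p$; you end by describing the reconciliation as "the decisive and most delicate step" without carrying it out. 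Be aware that the paper never proves this direction either: the corollary is quoted from \cite{Guo1}, and only its "if" part is re-derived from Theorem \ref{C4}. So as a self-contained proof of the stated equivalence, your proposal is incomplete.

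Moreover, the obstacle you located is essential, not merely delicate: with the definition of $\mathfrak{U}_h$-normality as transcribed in this paper, no choice of $T$ can work in general. Since a normal Hall subgroup of $G$ whose order is divisible by $p$ must contain every Sylow $p$-subgroup of $G$, once $P_1\neq 1$ the Hall clause forces $P\leq P_1T$; if in addition $(P_1)_G=1$ and $Z_{\mathfrak{U}}(G)=1$, one needs $T\unlhd G$ with $P\leq P_1T$ Hall and $P_1\cap T=1$. Take $G=K\rtimes\langle a\rangle$ with $K=C_3\times C_3$ and $a$ of order $4$ acting faithfully and irreducibly, e.g. via $\left(\begin{smallmatrix}0 & -1\\ 1 & 0\end{smallmatrix}\right)$ over $\mathbb{F}_3$. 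Then $G$ is $2$-nilpotent and $N_G(P)=P\in\mathfrak{N}_2$, while the only normal subgroups of $G$ are $1$, $K$, $K\langle a^2\rangle$ and $G$; for the unique maximal subgroup $P_1=\langle a^2\rangle$ of $P$ one has $(P_1)_G=1$ and $Z_{\mathfrak{U}}(G)=1$ (the unique minimal normal subgroup $K$ is a non-cyclic chief factor), and each candidate $T$ fails: $P_1$ itself is not normal, $KP_1$ has index $2$ but even order (so is not Hall), and $T=K\langle a^2\rangle$ or $T=G$ gives $P_1\cap T=P_1\neq 1$. Hence $P_1$ is not $\mathfrak{U}_h$-normal in this $2$-nilpotent group, so the "only if" direction cannot be filled in from the definitions used here; it can only be inherited from (or corrected against) the formulation in \cite{Guo1}. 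The sound version of your write-up is: prove the sufficiency as you did, and attribute the converse to \cite{Guo1}, flagging this discrepancy.
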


\begin{Corollary}\textup{\cite[Theorem 4.3]{G4}}  Let $p$ be a prime divisor of $|G|$ and $P$ a Sylow $p$-subgroup of $G$. Then $G\in \mathfrak{N}_p$ if and only if $N_{G}(P)\in \mathfrak{N}_p$ and every maximal subgroup of $P$ not having a $p$-nilpotent supplement in $G$ is $\mathfrak{U}_{n}$-normal in $G$.\end{Corollary}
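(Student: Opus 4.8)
The plan is to reduce the corollary to Lemma \ref{1N}, exploiting the embedding relationships recorded at the start of Section 4. Since the statement is a biconditional, I would treat the two implications separately and expect the substance to lie in a single invocation of Lemma \ref{1N} rather than in any genuinely new argument.

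For the forward implication, suppose $G\in\mathfrak{N}_p$. Because $p$-nilpotency is inherited by subgroups, $N_G(P)\in\mathfrak{N}_p$ follows immediately. Moreover, for any subgroup $M$ of $G$ one has $G=MG$ with $G$ itself $p$-nilpotent, so every maximal subgroup of $P$ admits $G$ as a $p$-nilpotent supplement. Consequently there are no maximal subgroups of $P$ lacking a $p$-nilpotent supplement, and the hypothesis on $\mathfrak{U}_n$-normal subgroups is satisfied vacuously. This settles the easy direction.

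For the converse, suppose $N_G(P)\in\mathfrak{N}_p$ and that every maximal subgroup of $P$ without a $p$-nilpotent supplement in $G$ is $\mathfrak{U}_n$-normal in $G$. The key step is the observation noted at the beginning of Section 4, that every $\mathfrak{F}_n$-normal subgroup is weakly $\mathfrak{F}_s$-quasinormal: indeed, a normal subgroup is automatically $S$-quasinormal, so the defining data of $\mathfrak{U}_n$-normality (a normal $T$ with $HT$ normal and $(H\cap T)H_G/H_G\leq Z_{\mathfrak{U}_p}(G/H_G)$) furnishes precisely the data required for weak $(\mathfrak{U}_p)_s$-quasinormality. Applying this with $\mathfrak{F}=\mathfrak{U}_p$, I would argue that an arbitrary maximal subgroup $M$ of $P$ falls into one of two cases: either $M$ has a $p$-nilpotent supplement in $G$, or it does not, in which case it is $\mathfrak{U}_n$-normal and hence weakly $(\mathfrak{U}_p)_s$-quasinormal in $G$. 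In either case $M$ meets one of the two alternatives in the hypothesis of Lemma \ref{1N}.

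Thus the hypotheses of Lemma \ref{1N} hold in full—$N_G(P)\in\mathfrak{N}_p$ together with the dichotomy on maximal subgroups of $P$—and that lemma yields $G\in\mathfrak{N}_p$, completing the converse. I do not anticipate a serious obstacle here; the only points requiring care are the honest verification that $\mathfrak{U}_n$-normality specializes to weak $(\mathfrak{U}_p)_s$-quasinormality, and the clean handling of the vacuous case in the forward direction.
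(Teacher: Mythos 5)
Your proposal is correct and follows exactly the paper's intended route: the forward direction is trivial (every maximal subgroup has $G$ itself as a $p$-nilpotent supplement, and $\mathfrak{N}_p$ is subgroup-closed), and the converse is just Lemma \ref{1N} combined with the Section 4 observation that $\mathfrak{U}_n$-normal subgroups are weakly $(\mathfrak{U}_p)_s$-quasinormal. One small correction: the definition of $\mathfrak{U}_n$-normality bounds $(H\cap T)H_G/H_G$ by $Z_{\mathfrak{U}}(G/H_G)$, not $Z_{\mathfrak{U}_p}(G/H_G)$, so the embedding is not ``precisely'' the same data but additionally uses the containment $Z_{\mathfrak{U}}(G/H_G)\leq Z_{\mathfrak{U}_p}(G/H_G)$, which holds because $\mathfrak{U}\subseteq\mathfrak{U}_p$ makes every $\mathfrak{U}$-central chief factor $\mathfrak{U}_p$-central.
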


\begin{Corollary}\textup{\cite[Theorem 4.1]{WY}} Suppose that $P_1$ is $c$-normal in $G$ for every Sylow subgroup $P$ of $G$ and every maximal subgroup $P_1$ of $P$. Then $G\in \frak{U}$. \end{Corollary}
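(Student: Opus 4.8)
The plan is to obtain this corollary directly from Theorem \ref{1s}, so the real content lies in checking that the $c$-normality hypothesis is strong enough to feed into that theorem. Concretely, I would first verify the implication recorded informally in Section 4: every $c$-normal subgroup of $G$ is weakly $(\mathfrak{U}_p)_s$-quasinormal in $G$.

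To see this, suppose $H$ is $c$-normal in $G$, so that $G$ has a normal subgroup $T$ with $G = HT$ and $H \cap T \leq H_G$. I would simply reuse this $T$ as the witness in the definition of weak $(\mathfrak{U}_p)_s$-quasinormality. Since $T$ is normal it is in particular $S$-quasinormal; the product $HT = G$ is trivially $S$-quasinormal in $G$; and because $H \cap T \leq H_G$ the image $(H \cap T)H_G/H_G$ is trivial, hence certainly contained in $Z_{\mathfrak{U}_p}(G/H_G)$. Thus $H$ is weakly $(\mathfrak{U}_p)_s$-quasinormal in $G$, and this argument works for any formation, $\mathfrak{U}_p$ in particular.

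With this in hand the corollary is immediate. By hypothesis every maximal subgroup $P_1$ of every Sylow subgroup $P$ of $G$ is $c$-normal, hence weakly $(\mathfrak{U}_p)_s$-quasinormal in $G$. In particular, for every prime $p$ dividing $|G|$ and every non-cyclic Sylow $p$-subgroup $P$, every maximal subgroup of $P$ satisfies the first alternative in the hypothesis of Theorem \ref{1s}. Applying Theorem \ref{1s} then yields $G \in \mathfrak{U}$.

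I do not expect any genuine obstacle here: the argument is a routine unwinding of the definition of $c$-normality followed by a single invocation of Theorem \ref{1s}. The only point deserving a moment's care is that the hypothesis of the corollary is stated for all Sylow subgroups, whereas Theorem \ref{1s} asks only about the non-cyclic ones; since the corollary's hypothesis is the stronger of the two, this causes no difficulty.
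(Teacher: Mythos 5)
Your proposal is correct and matches the paper's own (implicit) derivation: the paper notes in Section 4 that $c$-normal subgroups are weakly $\mathfrak{F}_s$-quasinormal — exactly by the argument you give, reusing the normal supplement $T$ as witness — and then obtains this corollary as a special case of Theorem \ref{1s}. Your observation that the hypothesis covers all Sylow subgroups while the theorem only needs the non-cyclic ones is the right (and only) point of care.
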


\begin{Corollary}\textup{\cite[Corollary 3.8]{Yang}} $G\in \mathfrak{U}$ if and only if every maximal subgroup of every non-cyclic Sylow subgroup of $G$ is $\mathfrak{U}_n$-supplemented in $G$.\end{Corollary}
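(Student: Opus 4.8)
The plan is to establish the equivalence by treating the two implications separately, with the substantive direction reduced to Theorem \ref{1s}. For the necessity, assume $G \in \mathfrak{U}$ and let $H$ be an arbitrary subgroup of $G$; since $\mathfrak{U}$ is a formation it is closed under homomorphic images, so $G/H_G \in \mathfrak{U}$ and therefore $Z_{\mathfrak{U}}(G/H_G) = G/H_G$. Choosing the normal subgroup $T = G$ gives $G = HT$ together with $(H \cap T)H_G/H_G = H/H_G \le G/H_G = Z_{\mathfrak{U}}(G/H_G)$, so $H$ is $\mathfrak{U}_n$-supplemented in $G$. In particular every maximal subgroup of every non-cyclic Sylow subgroup is $\mathfrak{U}_n$-supplemented, which settles the ``only if'' part.

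For the sufficiency I would first record the inclusion $\mathfrak{U} \subseteq \mathfrak{U}_p$: a supersoluble group has all of its chief factors of prime order, so in particular all of its $p$-chief factors have order $p$ and it is $p$-supersoluble. Because an $\mathfrak{F}_1$-central chief factor is $\mathfrak{F}_2$-central whenever $\mathfrak{F}_1 \subseteq \mathfrak{F}_2$, this yields $Z_{\mathfrak{U}}(X) \le Z_{\mathfrak{U}_p}(X)$ for every group $X$. Now let $P$ be a non-cyclic Sylow $p$-subgroup of $G$ and let $H$ be a maximal subgroup of $P$ that is $\mathfrak{U}_n$-supplemented, so that there is a normal subgroup $T$ of $G$ with $G = HT$ and $(H \cap T)H_G/H_G \le Z_{\mathfrak{U}}(G/H_G)$. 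Since $T$ is normal it is $S$-quasinormal in $G$, and $HT = G$ is trivially $S$-quasinormal; combining this containment with $Z_{\mathfrak{U}}(G/H_G) \le Z_{\mathfrak{U}_p}(G/H_G)$ shows that $(H \cap T)H_G/H_G \le Z_{\mathfrak{U}_p}(G/H_G)$. Hence $H$ is weakly $(\mathfrak{U}_p)_{s}$-quasinormal in $G$, so the hypothesis of Theorem \ref{1s} is satisfied and $G \in \mathfrak{U}$.

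The only point requiring care is the mismatch between the two hypercentres: $\mathfrak{U}_n$-supplementation is defined through $Z_{\mathfrak{U}}$, whereas weak $(\mathfrak{U}_p)_{s}$-quasinormality is defined through $Z_{\mathfrak{U}_p}$. This is resolved exactly by the monotonicity $Z_{\mathfrak{U}} \le Z_{\mathfrak{U}_p}$ coming from $\mathfrak{U} \subseteq \mathfrak{U}_p$; apart from this observation the argument is a direct unwinding of the definitions followed by an appeal to Theorem \ref{1s}, and I expect no further difficulty.
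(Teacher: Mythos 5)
Your proof is correct and follows exactly the route the paper intends: reduce the corollary to Theorem \ref{1s} by showing that an $\mathfrak{U}_n$-supplemented maximal subgroup of a Sylow subgroup is weakly $(\mathfrak{U}_p)_s$-quasinormal (the paper merely asserts this reduction with ``it is easy to see''), and handle the converse by taking $T=G$ and using $Z_{\mathfrak{U}}(G/H_G)=G/H_G$ for $G\in\mathfrak{U}$. Your explicit treatment of the monotonicity $Z_{\mathfrak{U}}\leq Z_{\mathfrak{U}_p}$, which bridges the mismatch between the hypercentre in the definition of $\mathfrak{U}_n$-supplementation and the one in weak $(\mathfrak{U}_p)_s$-quasinormality, fills in precisely the detail the paper leaves implicit.
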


\begin{Corollary}\textup{\cite[Lemma 3.8]{Ram}} Let $p$ be the smallest prime dividing $|G|$ and $P$ a Sylow
$p$-subgroup of $G$. If the subgroups of $P$ of order $p$ or order $4$ are $c$-normal in $G$, then $G\in \mathfrak{N}_p$.\end{Corollary}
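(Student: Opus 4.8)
The plan is to obtain this corollary as an immediate consequence of Theorem \ref{C6} by taking $E=G$. First I would observe that, since $p$ is the smallest prime dividing $|G|$, every prime factor of $p-1$ is strictly smaller than $p$ and hence does not divide $|G|$; thus $(|G|,p-1)=1$, which is exactly the numerical hypothesis of Theorem \ref{C6}. Setting $E=G$ gives $G/E=1\in\mathfrak{N}_p$ trivially, and the prescribed $P$ is a Sylow $p$-subgroup of $E=G$, as required by that theorem.

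Next I would translate the $c$-normality hypothesis into the language of Theorem \ref{C6}. As recorded in the discussion opening Section 4, every $c$-normal subgroup of $G$ is weakly $\mathfrak{F}_s$-quasinormal in $G$ for an arbitrary formation $\mathfrak{F}$: if $H$ is $c$-normal via a normal (hence $S$-quasinormal) subgroup $T$ with $G=HT$ and $H\cap T\leq H_G$, then $HT=G$ is $S$-quasinormal in $G$ and $(H\cap T)H_G/H_G=1\leq Z_{\mathfrak{F}}(G/H_G)$. Applying this with $\mathfrak{F}=\mathfrak{U}_p$, each subgroup of $P$ of order $p$ or $4$ that is $c$-normal in $G$ is in particular weakly $(\mathfrak{U}_p)_s$-quasinormal in $G$.

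Finally I would check that the family controlled by the corollary's hypothesis contains the cyclic subgroups singled out in Theorem \ref{C6}. Every subgroup of order $p$ is cyclic, and every cyclic subgroup of order $4$ (which can occur only when $p=2$) is a subgroup of $P$ of order $4$; hence assuming that \emph{all} subgroups of $P$ of order $p$ or $4$ are $c$-normal forces, a fortiori, every cyclic subgroup of $P$ of order $p$ or $4$ (when $P$ is a non-abelian $2$-group) to be weakly $(\mathfrak{U}_p)_s$-quasinormal in $G$. All hypotheses of Theorem \ref{C6} are then in force, and it delivers $G\in\mathfrak{N}_p$.

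The argument is essentially a bookkeeping reduction, so I do not expect a genuine obstacle. The only point requiring care is the precise matching of the two hypotheses: one must confirm that passing from the corollary's condition on all subgroups of order $p$ or $4$ to the theorem's condition on just the cyclic such subgroups (and, for order $4$, only in the non-abelian $2$-group case) merely weakens the assumption, so that Theorem \ref{C6} genuinely applies.
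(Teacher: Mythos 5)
Your proposal is correct and matches the paper's intended derivation: the paper presents this corollary precisely as a special case of Theorem \ref{C6} (equivalently Lemma \ref{2M}), using the observation in Section 4 that $c$-normal subgroups are weakly $\mathfrak{F}_s$-quasinormal, together with the facts that the smallest prime $p$ gives $(|G|,p-1)=1$ and that taking $E=G$ makes $G/E\in\mathfrak{N}_p$ trivially. Your careful check that the corollary's hypothesis on all subgroups of order $p$ or $4$ subsumes the theorem's hypothesis on cyclic such subgroups is exactly the right bookkeeping.
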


\begin{Corollary}\textup{\cite[Theorem 3.3]{Hua1}} Let $p$ be a prime divisor of $|G|$ with $(|G|, p-1)=1$. Assume that $G$ has a normal subgroup $N$ such that $G/N\in \mathfrak{N}_p$ and for every cyclic subgroup $L$ of order $p$ or $4$ of $N$ which is not ${(\mathfrak{N}_p)}_s$-quasinormal in $G$, $L$ has a $p$-nilpotent supplement in $G$. Then $G\in \mathfrak{N}_p$.\end{Corollary}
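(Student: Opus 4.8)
The plan is to deduce this Corollary directly from Theorem \ref{C6} by taking $E=N$. Two cosmetic differences in the hypotheses must first be reconciled: Theorem \ref{C6} is phrased in terms of \emph{weakly $(\mathfrak{U}_p)_{s}$-quasinormal} subgroups and ranges over the cyclic subgroups of a fixed Sylow $p$-subgroup $P$ of $E$, whereas the Corollary is phrased in terms of \emph{$(\mathfrak{N}_p)_{s}$-quasinormal} subgroups and ranges over all cyclic subgroups of order $p$ or $4$ of $N$.

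First I would settle the embedding property. An $\mathfrak{F}_{s}$-quasinormal subgroup is defined via a \emph{normal} supplement $T$; since every normal subgroup is $S$-quasinormal, that same $T$ at once witnesses that the subgroup is weakly $\mathfrak{F}_{s}$-quasinormal, which is precisely the observation recorded at the start of Section 4. Taking $\mathfrak{F}=\mathfrak{N}_p$, every $(\mathfrak{N}_p)_{s}$-quasinormal subgroup is therefore weakly $(\mathfrak{N}_p)_{s}$-quasinormal. Next, because $\mathfrak{N}_p\subseteq\mathfrak{U}_p$, any $\mathfrak{N}_p$-central chief factor $L/K$ (i.e.\ one with $L/K\rtimes G/C_G(L/K)\in\mathfrak{N}_p$) is automatically $\mathfrak{U}_p$-central, whence $Z_{\mathfrak{N}_p}(X)\leq Z_{\mathfrak{U}_p}(X)$ for every group $X$; in particular $Z_{\mathfrak{N}_p}(G/H_G)\leq Z_{\mathfrak{U}_p}(G/H_G)$. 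Feeding this inequality into the definition, the very same supplement $T$ shows that every weakly $(\mathfrak{N}_p)_{s}$-quasinormal subgroup is weakly $(\mathfrak{U}_p)_{s}$-quasinormal. Chaining the two implications, every $(\mathfrak{N}_p)_{s}$-quasinormal subgroup of $G$ is weakly $(\mathfrak{U}_p)_{s}$-quasinormal in $G$.

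With this in hand I would set $E=N$ and let $P$ be any Sylow $p$-subgroup of $N$. Any cyclic subgroup $L\leq P$ of order $p$ or $4$ is in particular a cyclic subgroup of $N$ of order $p$ or $4$, so the hypothesis of the Corollary applies to it: either $L$ is $(\mathfrak{N}_p)_{s}$-quasinormal in $G$, and hence weakly $(\mathfrak{U}_p)_{s}$-quasinormal by the previous paragraph, or $L$ has a $p$-nilpotent supplement in $G$. Since the Corollary imposes this alternative for \emph{all} cyclic subgroups of order $p$ or $4$ of $N$, it holds a fortiori for those of $P$ demanded in Theorem \ref{C6} (the order-$4$ ones being required there only when $P$ is a non-abelian $2$-group). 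Thus $E=N$ satisfies every hypothesis of Theorem \ref{C6}, and $G\in\mathfrak{N}_p$ follows at once.

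I expect no genuine obstacle, since the statement is a specialization of Theorem \ref{C6}. The only points that require care are the monotonicity $Z_{\mathfrak{N}_p}\leq Z_{\mathfrak{U}_p}$ of the hypercentre under the inclusion $\mathfrak{N}_p\subseteq\mathfrak{U}_p$, and the bookkeeping confirming that the Corollary's condition over cyclic subgroups of $N$ subsumes the theorem's condition over cyclic subgroups of the Sylow subgroup $P$.
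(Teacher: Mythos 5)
Your proposal is correct and follows exactly the route the paper intends: the corollary is listed in Section 4 as a special case of Theorem \ref{C6} with $E=N$, justified by the observation that $\mathfrak{F}_{s}$-quasinormal subgroups (having a normal, hence $S$-quasinormal, supplement) are weakly $\mathfrak{F}_{s}$-quasinormal. Your explicit verification of the monotonicity $Z_{\mathfrak{N}_p}(X)\leq Z_{\mathfrak{U}_p}(X)$, needed to pass from $(\mathfrak{N}_p)_{s}$-quasinormality to weak $(\mathfrak{U}_p)_{s}$-quasinormality, is a detail the paper leaves implicit, so your write-up is if anything more complete than the paper's.
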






\begin{Corollary}\textup{\cite[Theorem 4.2]{WY}} Suppose that $\langle x\rangle $ is $c$-normal in $G$ for every element $x$ of $G$ with prime order or order $4$. Then $G\in \frak{U}$.\end{Corollary}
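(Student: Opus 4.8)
The plan is to obtain this as the special case $E=G$ of Theorem \ref{2s}. With $E=G$ the quotient $G/E$ is trivial and so lies in $\mathfrak{U}$, and the only thing left to verify is the local hypothesis of Theorem \ref{2s}: for every prime $p$ dividing $|G|$ and every non-cyclic Sylow $p$-subgroup $P$ of $G$, each cyclic subgroup of $P$ of order $p$ or $4$ (the latter only when $P$ is a non-abelian $2$-group) is weakly $(\mathfrak{U}_p)_s$-quasinormal in $G$ or has a $p$-supersoluble supplement in $G$.

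First I would record the routine implication that $c$-normality forces weak $(\mathfrak{U}_p)_s$-quasinormality, as already remarked at the start of this section. If $H$ is $c$-normal in $G$, then $G$ has a normal subgroup $T$ with $G=HT$ and $H\cap T\leq H_G$. A normal subgroup is $S$-quasinormal, and $HT=G$ is trivially $S$-quasinormal in $G$; furthermore $(H\cap T)H_G/H_G=H_G/H_G=1\leq Z_{\mathfrak{U}_p}(G/H_G)$. Hence the same $T$ witnesses that $H$ is weakly $(\mathfrak{U}_p)_s$-quasinormal in $G$.

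Next I would translate the hypothesis of the corollary into the required local condition. A cyclic subgroup $X$ of $P$ of order $p$ or $4$ can be written $X=\langle x\rangle$, where $x$ has prime order $p$ or order $4$ respectively; since the hypothesis asserts that $\langle x\rangle$ is $c$-normal in $G$ for \emph{every} such element $x$ of $G$, in particular $X$ is $c$-normal, and hence weakly $(\mathfrak{U}_p)_s$-quasinormal by the previous paragraph. Thus every cyclic subgroup of $P$ of the required orders satisfies the first alternative in Theorem \ref{2s}, so the hypothesis of that theorem holds with $E=G$, and we conclude $G\in\mathfrak{U}$.

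As every step is either a direct unwinding of the definitions or a single appeal to Theorem \ref{2s}, I expect no genuine obstacle. The only point deserving care is the elementary observation that a generator of a cyclic subgroup of order $p$ (resp. $4$) is an element of prime order (resp. order $4$), which is exactly what lets the global $c$-normality hypothesis of the corollary feed the local condition required by Theorem \ref{2s}.
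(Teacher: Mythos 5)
Your proposal is correct and matches the paper's intended derivation: the paper obtains this corollary precisely by observing that a $c$-normal subgroup is weakly $\mathfrak{F}_s$-quasinormal (the normal supplement $T$ satisfies $G=HT$ $S$-quasinormal and $(H\cap T)H_G/H_G=1$) and then invoking Theorem \ref{2s} with $E=G$. Your verification of both the embedding-property implication and the specialization of the hypotheses is exactly the argument the paper leaves implicit.
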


\begin{Corollary}\textup{\cite[Corollary 3.6]{Guo1}} $G\in \mathfrak{U}$ if and only if every cyclic subgroup of $G$
of prime order or order $4$ is $\mathfrak{U}_h$-normal in $G$.\end{Corollary}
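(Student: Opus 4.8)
The plan is to obtain this corollary directly from Theorem \ref{2s} applied with $E=G$, after recording the elementary fact that $\mathfrak{U}_h$-normality of a cyclic $p$-subgroup forces its weak $(\mathfrak{U}_p)_s$-quasinormality. Both implications of the ``if and only if'' must be treated, but the two are of very unequal difficulty: the necessity is essentially formal, while the sufficiency is where the structural machinery of Section 3 is used.

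For the necessity direction I would argue as follows. Suppose $G\in\mathfrak{U}$, and let $H$ be any cyclic subgroup of prime order or order $4$. I would simply take $T=G$ in the definition of $\mathfrak{U}_h$-normality. Then $HT=G$ is a normal Hall subgroup of $G$ (trivially, since $G$ is a normal Hall subgroup of itself), and since $G/H_G$ is a quotient of the supersoluble group $G$ we have $G/H_G\in\mathfrak{U}$, whence $Z_{\mathfrak{U}}(G/H_G)=G/H_G$. Consequently $(H\cap T)H_G/H_G=H/H_G\leq Z_{\mathfrak{U}}(G/H_G)$ holds automatically, so $H$ is $\mathfrak{U}_h$-normal in $G$. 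This disposes of necessity with no real work.

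For the sufficiency direction I would verify that the hypotheses of Theorem \ref{2s} hold with $E=G$, noting that $G/E=1\in\mathfrak{U}$. Fix a prime $p$ dividing $|G|$, a non-cyclic Sylow $p$-subgroup $P$ of $G$, and a cyclic subgroup $H\leq P$ of order $p$ or $4$ (the latter when $P$ is a non-abelian $2$-group). By hypothesis $H$ is $\mathfrak{U}_h$-normal, so $G$ has a normal subgroup $T$ with $HT$ a normal Hall subgroup and $(H\cap T)H_G/H_G\leq Z_{\mathfrak{U}}(G/H_G)$. I claim this same $T$ witnesses weak $(\mathfrak{U}_p)_s$-quasinormality: $T$ is normal and hence $S$-quasinormal, and $HT$ is a normal (Hall) subgroup and hence also $S$-quasinormal, so only the hypercentre condition remains to be adjusted.

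The sole point requiring attention, and the one place beyond unwinding definitions, is the passage from the formation $\mathfrak{U}$ to $\mathfrak{U}_p$ in that hypercentre inclusion. Here I would invoke that $\mathfrak{U}\subseteq\mathfrak{U}_p$, since every supersoluble group is $p$-supersoluble; consequently any $\mathfrak{U}$-central chief factor $L/K$, for which $L/K\rtimes G/C_G(L/K)\in\mathfrak{U}\subseteq\mathfrak{U}_p$, is a fortiori $\mathfrak{U}_p$-central, and therefore $Z_{\mathfrak{U}}(G/H_G)\leq Z_{\mathfrak{U}_p}(G/H_G)$. Thus $(H\cap T)H_G/H_G\leq Z_{\mathfrak{U}_p}(G/H_G)$ and $H$ is weakly $(\mathfrak{U}_p)_s$-quasinormal in $G$, so the hypothesis of Theorem \ref{2s} is satisfied and $G\in\mathfrak{U}$ follows. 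The main (and rather mild) obstacle is precisely this $\mathfrak{U}$-versus-$\mathfrak{U}_p$ comparison of hypercentres; everything else reduces to the trivial observations that normal subgroups and normal Hall subgroups are $S$-quasinormal, together with a direct appeal to Theorem \ref{2s}.
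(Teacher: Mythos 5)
Your proposal is correct and follows exactly the route the paper intends: the corollary is listed in Section 4 as a special case of Theorem \ref{2s} (applied with $E=G$), using the observation that $\mathfrak{U}_h$-normal subgroups are weakly $\mathfrak{U}_s$-quasinormal together with the trivial necessity direction via $T=G$. Your writeup in fact makes explicit the one step the paper leaves implicit, namely that $\mathfrak{U}\subseteq\mathfrak{U}_p$ forces $Z_{\mathfrak{U}}(G/H_G)\leq Z_{\mathfrak{U}_p}(G/H_G)$, so that weak $\mathfrak{U}_s$-quasinormality upgrades to the weak $(\mathfrak{U}_p)_s$-quasinormality required by Theorem \ref{2s}.
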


\vskip 0.4 true cm

\begin{center}{\textbf{Acknowledgments}}
\end{center}
Research is supported by a NNSF grant of China (grant \#11371335) and Research Fund for the Doctoral Program of Higher Education of China (Grant 20113402110036). The first author is supported by
the Youth Foundation of Shanxi Datong University (2012Q17). \\ \\
\vskip 0.4 true cm



\bigskip
\bigskip


{\footnotesize \pn{\bf Yuemei Mao}\; \\ {Department of Mathematics}, {University of Science and Technology of China, Hefei, 230026, P. R. China}\\ {School of Mathematical and Computer Sciences}, {University of Datong of Shanxi, Datong, 037009, P. R. China}\\
{\tt Email: maoym@mail.ustc.edu.cn}\\

{\footnotesize \pn{\bf Xiaoyu Chen}\; \\ {Department of Mathematics}, {University of Science and Technology of China, Hefei, 230026, P. R. China}\\
{\tt Email: jelly@mail.ustc.edu.cn}\\

{\footnotesize \pn{\bf Wenbin Guo}\; \\ {Department of Mathematics}, {University of Science and Technology of China, Hefei, 230026, P. R. China}\\
{\tt Email: wbguo@ustc.edu.cn}\\

\begin{thebibliography}{20}
\bibitem{Ba1} A. Ballester-Bolinches, R. Esteban-Romero, On minimal non-supersoluble groups, {\em Rev. Mat. Iberoamericana} {\bf 23} (2007), no. 1, 127--142.
\bibitem{Bal} A. Ballester-Bolinches, R. Esteban-Romero and M. Asaad, \textit{Products of Finite Groups},
Walter de Gruyter, Berlin-New York, 2010.
\bibitem{KT} K. Doerk and T. Hawkes, \textit{Finite Soluble Groups}, Walter de Gruyter, Berlin-New  York, 1992.
\bibitem{DG} D. Gorenstein, \textit{Finite Groups}, Harper \& Row Publishers, New York-Evanston-London, 1968.
\bibitem{FG} F. Gross, Conjugacy of odd order Hall subgroups, {\em Bull. London Math. Soc.} {\bf 19} (1987), 311--319.
\bibitem{G2} W. Guo, {\em The Theory of Classes of Groups}, Science Press-Kluwer Academic Publishers, Beijing-New York-Dordrecht-Boston-London, 2000.
\bibitem{Gu1} W. Guo, On $\mathfrak{F}$-supplemented subgroups of finite groups, {\em Manu. Math.} {\bf 127} (2008), 139--150.
\bibitem{Guo} W. Guo and A. N. Skiba, On $\mathfrak{F}\mathnormal{\Phi}^*$-hypercentral subgroups of finite groups, {\em J. Algebra} {\bf 372} (2012), 275--292.
\bibitem{Guo1} W. Guo, X. Feng and J. Huang, New characterizations of some classes of finite groups, {\em Bull. Malays. Math. Sci. Soc.} {\bf 34}(3) (2011) 575--589.
\bibitem{G4} W. Guo and X. Yu, On $\mathfrak{F}_n$-normal subgroups of finite groups, {\em Sib. J. Math.} {\bf 52} (2011), no. 2, 197--206.
\bibitem{XG} X. Guo and K. P. Shum, On $c$-normal maximal and minimal subgroups of Sylow $p$-subgroup of finite groups,  {\em Arch. Math.} {\bf 80} (2003), 561--569.
\bibitem{JH} J. Huang, On $\mathfrak{F}_{s}$-quasinormal subgroups of finite groups, {\em Comm. Algebra} {\bf 38} (2010), 4063--4076.
\bibitem{Hua1} J. Huang, N. Yang, B. Hu and X. Yu, A covering subgroup system for the class of $p$-nilpotent groups, {\em Sib. J. Math.} {\bf 53} (2012), no. 2, 352--360.
\bibitem{HB} B. Huppert, {\em Endliche Gruppen I}, Springer, Berlin, 1967.
\bibitem{LM} L. Miao and B. Li, On  $\mathfrak{F}$-quasinormal primary subgroups of finite groups, {\em Comm. Algebra} {\bf 39} (2011), 3515--3525.
\bibitem{Ram} M. Ramadan, M. Ezzat Mohamed and A. A. Heliel, On $c$-normality of certain subgroups of prime power order of finite groups, {\em Arch. Math.} {\bf 85} (2005), 203--210.
\bibitem{AN} A. N. Skiba, On weakly $s$-permutable subgroups of finite groups, {\em J. Algebra} {\bf 315} (2007), 192--209.
\bibitem{WY} Y. Wang, $C$-normality of groups and its properties, {\em J. Algebra}  {\bf 180} (1996), 954--965.
\bibitem{Yang} N. Yang and W. Guo, On $\mathfrak{F}_n$-supplemented subgroups of finite groups, {\em Asian-Eur. J. Math.} {\bf 1}(4) (2008) 619--629.
\end{thebibliography}
\end{document}